\definecolor{darkblue}{rgb}{0.2,0.2,0.6}
\definecolor{superdarkblue}{rgb}{0.2,0.2,0.3}
\definecolor{citegreen}{rgb}{0.2,0.2,0.6}
\definecolor{citegreen}{rgb}{0.2,0.2,0.6}
\definecolor{darkred}{rgb}{0.6,0.2,0.2}
\def\section{\@startsection{section}{1}\z@{.9\linespacing\@plus\linespacing}%
	{.7\linespacing} {\fontsize{13}{14}\selectfont\bfseries\centering}}
\def\paragraph{\@startsection{paragraph}{4}%
	\z@{0.3em}{-.5em}%
	{$\bullet$ \ \normalfont\itshape}}
\def\bm1{\mathbbm{1}}
\newcommand\dl{\delta}
\newcommand\Qb{\sfQ_\beta}
\newcommand\Db{\sfD_\beta}
\newcommand\wDb{\sfD_{\beta,V}}
\newcommand\wB{\sfB_{\aa,V}}
\newcommand\Gb{\sfG_{\aa,\beta}}
\newcommand\Pb{\sfP_{\aa,\beta}}
\newcommand{\ie}{\emph{i.e.}}
\newcommand{\eg}{\emph{e.g.}}
\newcommand{\cf}{\emph{cf.}}
\newcommand{\R}{\mathbb{R}}
\newcommand{\e}{\mathrm{e}}
\renewcommand{\aa}{\alpha}
\newcommand{\lm}{\lambda}
\newcommand{\Op}{\sfH_{\alpha ,\beta}}
\newcommand{\beq}{\begin{equation} \begin{split}}
\newcommand{\eeq}{\end{split} \end{equation}}
\newcommand{\comm}[1]{}
\def\sfG{\mathsf{G}}
\def\sfH{\mathsf{H}}
\def\bm1{\mathbbm{1}}
\def\s{\sigma}
\def\p{\partial}
\newcommand\dd{\mathrm{d}}
\renewcommand{\iff}{\textit{if, and only if,}\,}
\def\arr{\rightarrow}
\newcommand{\sfJ}{\mathsf{J}}
\def\tt{\theta}
\def\lm{\lambda}
\def\s{\sigma}
\def\sd{\sigma_{\rm d}}
\def\sess{\sigma_{\rm ess}}
\def\ii{{\mathsf{i}}}
\def\p{\partial}
\def\kp{\kappa}
\def\sfH{\mathsf{H}}
\def\sfP{\mathsf{P}}
\def\sfP{\mathsf{P}}
\newcounter{counter_a}
\newenvironment{myenum}{\begin{list}{{\rm(\roman{counter_a})}}%
{\usecounter{counter_a}
\setlength{\itemsep}{1.ex}\setlength{\topsep}{0.8ex}
\setlength{\leftmargin}{5ex}\setlength{\labelwidth}{5ex}}}{\end{list}}
\numberwithin{figure}{section}
\numberwithin{equation}{section}
\theoremstyle{plain}
\newtheorem*{thm*}{Theorem}
\newtheorem{thm}{Theorem}[section]
\newtheorem{lem}[thm]{Lemma}
\newtheorem{prop}[thm]{Proposition}
\newtheorem{cor}[thm]{Corollary}
\theoremstyle{remark}
\theoremstyle{plain}
\newcommand{\beu}{\begin{equation*}}
\newcommand{\eeu}{\end{equation*}}
\newcommand{\besu}{\begin{equation*}
\begin{aligned}}
\newcommand{\eesu}{\end{aligned}
\end{equation*}}
\newcommand{\bes}{\begin{equation}
\begin{aligned}}
\newcommand{\ees}{\end{aligned}
\end{equation}}
\newcommand\cB{\mathcal B}
\newcommand\cD{\mathcal D}
\newcommand\cF{\mathcal F}
\newcommand\cH{\mathcal H}
\newcommand\cS{\mathcal S}
\newcommand\frh{\mathfrak h}
\newcommand\ov{\overline}
\newcommand\wt{\widetilde}
\newcommand\wh{\widehat}
\newcommand\void[1]{}
\def\ov{\overline}
      \def\dC{{\mathbb C}}
   \def\dN{{\mathbb N}}   
      \def\dR{{\mathbb R}}
\def\sfA{{\mathsf A}}   \def\sfB{{\mathsf B}}   
\def\sfD{{\mathsf D}}      
\def\sfG{{\mathsf G}}   \def\sfH{{\mathsf H}}   \def\sfI{{\mathsf I}}
\def\sfJ{{\mathsf J}}      \def\sfL{{\mathsf L}}
\def\sfM{{\mathsf M}}   \def\sfN{{\mathsf N}}   
\def\sfP{{\mathsf P}}   \def\sfQ{{\mathsf Q}}   \def\sfR{{\mathsf R}}
   \def\cB{{\mathcal B}}   
\def\cD{{\mathcal D}}      \def\cF{{\mathcal F}}
   \def\cH{{\mathcal H}}   
      \def\cO{{\mathcal O}}
\def\cS{{\mathcal S}}   \def\cT{{\mathcal T}}   \def\cU{{\mathcal U}}
\def\cV{{\mathcal V}}
\newcommand{\Tr}{\mathrm{Tr}\,}
\newcommand{\supp}{\mathop{\mathrm{supp}}\nolimits}
\newtheorem{claim}{Claim}[section]
\newtheorem{definition}[claim]{Definition}
\definecolor{DarkGreen}{rgb}{0,0.5,0.1}
\newcommand\soutD{\bgroup\markoverwith
{\textcolor{DarkGreen}{\rule[.5ex]{2pt}{1pt}}}\ULon}
\newcommand{\Hm}[1]{\leavevmode{\marginpar{\tiny%
$\hbox to 0mm{\hspace*{-0.5mm}$\leftarrow$\hss}%
\vcenter{\vrule depth 0.1mm height 0.1mm width \the\marginparwidth}%
\hbox to 0mm{\hss$\rightarrow$\hspace*{-0.5mm}}$\\\relax\raggedright
#1}}}
\begin{document}

\title[Asymptotics of the bound state
    induced by \boldmath{$\delta$}-interaction on a weakly deformed plane]{Asymptotics
        of the bound state
    induced by \boldmath{$\delta$}-interaction supported on a weakly deformed plane}

\author{Pavel Exner}

%

\address{Department of Theoretical Physics, Nuclear Physics Institute, Czech Academy of Sciences, 25068 \v Re\v z near Prague, Czechia, and Doppler Institute for Mathematical Physics and Applied Mathematics, Czech Technical University, B\v rehov\'a 7, 11519 Prague, Czechia}
\email{exner@ujf.cas.cz}

\author{Sylwia Kondej}

\address{Institute of Physics, University of Zielona G\'ora, ul.\ Szafrana 4a, 65246 Zielona G\'ora, Poland}
\email{s.kondej@if.uz.zgora.pl}

\author{Vladimir Lotoreichik}
\address{Department of Theoretical Physics, Nuclear Physics Institute, Czech Academy of Sciences, 25068 \v Re\v z near Prague, Czechia}
\email{lotoreichik@ujf.cas.cz}

\keywords{Singular Schr\"odinger operator, $\delta$-interaction on a
locally deformed plane, existence of bound states,
asymptotics of the bound state, small deformation limit,	Birman-Schwinger principle}
\subjclass[2010]{35P15 (primary); 58J50,
	81Q37 (secondary)}

\maketitle
\begin{abstract}
    In this paper we consider
    the three-dimensional Schr\"{o}dinger operator
    with a $\delta$-interaction of strength
    $\aa > 0$ supported on an unbounded
    surface parametrized by the mapping
    $\dR^2\ni x\mapsto (x,\beta f(x))$, where $\beta \in [0,\infty)$ and $f\colon \dR^2\arr\dR$, $f\not\equiv 0$, is a $C^2$-smooth,
    compactly supported function. The surface supporting
    the interaction can be viewed as a local deformation of the plane.
    It is known that the essential spectrum of
    this Schr\"odinger operator
    coincides with $[-\frac14\aa^2,+\infty)$. We prove
    that for all sufficiently small $\beta > 0$ its discrete
    spectrum is non-empty and consists
    of a unique simple eigenvalue. Moreover, we obtain
    an asymptotic
    expansion of this eigenvalue in the limit $\beta \arr 0+$.
    In particular,
    this eigenvalue  tends to $-\frac14\aa^2$ exponentially fast as $\beta\arr 0+$.
\end{abstract}
%
\section{Introduction}\label{Sec.intro}
\subsection{Motivation}
Various physical systems can be effectively described by
Schr\"{o}dinger operators with $\delta$-interactions
supported on sets of zero Lebesgue measure.
To mention just a few, these operators are used:
\begin{myenum}
    \item [--] in mesoscopic physics in the model of leaky quantum graphs~\cite[Chap. 10]{EK};
    \item [--] for the description of atoms in strong magnetic fields~\cite{BD06};
    \item [--] in the theory of semiconductors as a model for excitons~\cite{HKPC17};
    \item [--] for the analysis of high contrast photonic crystals~\cite{FK96, HL17}.
\end{myenum}
One can expect that this list will keep expanding, in particular,
with the simplicity and versatility of the model in mind. This is
certainly a motivation to investigate its properties by rigorous
mathematical means.

One of the most traditional problems concerns the relation between
the geometry of the support of the $\delta$-interaction and the
spectrum of the corresponding Schr\"odinger operator; see the
review~\cite{E08}, the monograph~\cite{EK}, and the references
therein. A prominent particular question, addressed in numerous
papers (see \eg~\cite{BEL14a, EI01, EK03, EL17, OP16, P15}), is to
analyze whether bound states below the threshold of the essential
spectrum are induced by an attractive $\delta$-interaction supported
on an unbounded, asymptotically flat hypersurface.

In the two-dimensional setting, this question is answered
affirmatively in~\cite{EI01}, provided that the asymptotically
straight curve is not a straight line.  In the space dimension $d
\ge 4$, a circular conical surface is a non-trivial
example~\cite{LO16} of an asymptotically flat hypersurface such that
an attractive $\delta$-interaction of any strength, supported on it,
induces no bound states. Apparently, the three-dimensional case
happens to be the most subtle. In this space dimension, existence of
bound states (in fact, infinitely many of them) is shown
in~\cite{BEL14a, LO16, OP16} for all interaction strengths in the
geometric setting of conical surfaces, which is a special class of
asymptotically flat surfaces. On the other hand, for the most
natural geometric setting of locally deformed planes, existence of
at least one bound state below the threshold is proven
in~\cite{EK03} only in the strong-coupling regime. For the same
geometry, the question of existence of bound states below the
threshold for an arbitrary strength of an attractive
$\delta$-interaction still remains open and challenging.

The aim of this paper is to make one more step towards the complete
answer to this open question. Specifically, we prove the existence
of bound states induced by $\delta$-interactions supported on
locally deformed planes, in the small deformation limit. As a
by-product of the proof we obtain that for a sufficiently small
deformation the discrete spectrum consists of unique simple
eigenvalue. Moreover, we derive an asymptotic expansion of this
eigenvalue in terms of the profile of the deformation.
\subsection*{Notations}
Throughout the paper $g(\beta,\delta) = o_{\rm u}(h(\beta))$ and
$g(\beta,\delta) = \cO_{\rm u}(h(\beta))$ denote the standard asymptotic
notations in the limit $\beta \arr 0+$, which are additionally
uniform in $\delta \in [0,1]$. For a Hilbert space $\cH$ we denote
by $\cB(\cH)$ the space of bounded, everywhere defined linear
operators in $\cH$. We denote by
$(L^2(\R^d),(\cdot,\cdot)_{L^2(\R^d)})$ (respectively, by
$(L^2(\R^d;\dC^d),(\cdot,\cdot)_{L^2(\R^d;\dC^d)})$) the usual
$L^2$-spaces over $\R^d$, $d \in\dN$, of scalar-valued
(respectively, vector-valued) functions. By $\cF\colon
L^2(\dR^2)\arr L^2(\dR^2)$ we abbreviate the unitary
Fourier-Plancherel operator; with a slight abuse of terminology we
will refer to it as to Fourier transformation in $\dR^2$. In the
same vein, for any $\psi\in L^2(\dR^2)$ its Fourier transform
$\cF\psi$ will be denoted by $\wh \psi\in L^2(\dR^2)$. By
$H^1(\R^d)$ we denote the first order $L^2$-based Sobolev space over
$\R^d$, $d \in \dN$. For a $C^2$-smooth surface $\Gamma\subset\dR^3$,
$(L^2(\Gamma),(\cdot,\cdot)_{L^2(\Gamma)})$ is the usual $L^2$-space
over $\Gamma$, where the inner product $(\cdot,\cdot)_{L^2(\Gamma)}$
is introduced via the canonical Hausdorff measure $\s(\cdot)$ on
$\Gamma$; \cf~\cite[App.~C.8]{Le}. For an open interval $I\subset\dR$,
the operator-valued function $I\ni \dl\mapsto \sfB(\dl)\in\cB(\cH)$ is real analytic
if for any $\phi,\psi\in\cH$ the scalar-valued function
$I\ni \dl\mapsto (\sfB(\dl)\phi,\psi)_\cH$ is real analytic in the usual sense.

\subsection{The spectral problem for \boldmath{$\delta$}-interaction supported on a locally deformed plane}\label{sec:pre}
Let $\Gamma = \Gamma_\beta(f)\subset\R^3$, with $\beta \in
[0,\infty)$, be an unbounded surface given by
\begin{equation}\label{eq:Gamma}
    \Gamma
    :=
    \big\{
    (x_1,x_2,x_3)\in\R^3\colon x_3 = \beta f (x_1, x_2)\big\}\subset \R^3\,,
\end{equation}
where $f\colon \dR^2\arr \dR$ ($f\not\equiv 0)$ is a $C^2$-smooth,
compactly supported function. The surface $\Gamma$ can be viewed as
a local deformation of the plane $\dR^2\times\{0\}$. 
We also point out that in view of the identity $\Gamma_{-\beta}(f) =
\Gamma_\beta(-f)$ it is enough to consider non-negative values of
$\beta$ only. In what follows we set $\cS := \supp f$ and denote by
$L_f > 0$ the Lipschitz constant of $f$; \ie~the minimal positive
number such that $|f(x) - f(y)|\le L_f|x-y|$ holds for all
$x,y\in\dR^2$. By the mean-value theorem we infer that the
inequality $|\nabla f| \le L_f$ holds pointwise. Taking the
smoothness of $\Gamma$ into account, it is not difficult to check
that the mapping $\Omega \mapsto \s(\Omega \cap \Gamma)$ defines a
measure on $\R^3$, which belongs to the \emph{generalized Kato
class}; \cf~\cite[Sec. 2]{BEKS}.

Let a constant $\aa > 0$ be fixed.
According to~\cite[Sec. 2]{BEKS} and also to~\cite[Prop. 3.1]{BEL14b}, the symmetric quadratic form
\begin{equation}\label{eq:form}
    H^1(\R^3) \ni u\mapsto \frh_{\aa,\beta}[u]
    :=
    \|\nabla u\|^2_{L^2(\R^3;\dC^3)}
            -\aa\| u|_\Gamma\|^2_{L^2(\Gamma)}\,,
\end{equation}
is closed, densely defined, symmetric, and semi-bounded in $L^2(\R^3)$;
here $u|_{\Gamma}$ denotes the trace of $u$ onto $\Gamma$.
Recall that the trace map $H^1(\dR^3)\ni u\arr  u|_{\Gamma} \in L^2(\Gamma)$ is well defined and continuous  \cite[Thm. 3.38]{McL}. Now we are in position to define the Hamiltonian
with $\delta$-interaction supported on $\Gamma$, the main object of the present paper.
\begin{definition}\label{def:Op}
    The self-adjoint Schr\"odinger operator $\sfH_{\aa,\beta}$
    in $L^2(\R^3)$ corresponding to the formal differential expression $-\Delta-\aa\, \delta(x-\Gamma)$, $\aa  > 0$, is defined
    via the first representation theorem~\cite[Thm. VI 2.1]{Kato} as associated with the quadratic form $\frh_{\aa,\beta}$ in~\eqref{eq:form}.
\end{definition}
The surface $\Gamma$ is referred to as the support of the
$\delta$-interaction and the constant $\aa > 0$ is usually called
the strength of this interaction. Schr\"odinger operators with
$\delta$-interactions supported on locally deformed planes were
first investigated in~\cite{EK03} and then subsequently
in~\cite{BEHL17, BEL14b, E17}. In the following proposition we collect
some previously known fundamental spectral properties of $\Op$.
\begin{prop}\label{thm:known}
    The spectrum of the self-adjoint operator $\Op$
    introduced in Definition~\ref{def:Op} is characterised as follows.
    \begin{myenum}
        \item $\sess(\Op) = \left [-\frac14\aa^2,+\infty\right )$.
        \item $\sd(\Op) \ne \varnothing$ for all $\beta > 0$ and
         all    $\aa > 0$ large enough.
        \item $\sd(\sfH_{\aa,0}) = \varnothing$ for $\beta = 0$
        and all $\aa > 0$.
    \end{myenum}
\end{prop}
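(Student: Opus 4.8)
The plan is to recall the standard arguments for the three assertions, treating the easiest one first. For item (iii): when $\beta=0$ the surface $\Gamma$ is the plane $\dR^2\times\{0\}$, and the partial Fourier transform $\cF$ in the tangential variables $(x_1,x_2)$ represents $\sfH_{\aa,0}$ as the direct integral $\int_{\dR^2}^{\oplus}\big(-\tfrac{d^2}{dx_3^2}-\aa\delta(x_3)+|\xi|^2\big)\,d\xi$ of one-dimensional fibre operators on $L^2(\dR_{x_3})$. The fibre operator $-\tfrac{d^2}{dx_3^2}-\aa\delta(x_3)$ is the classical one-dimensional point interaction, whose spectrum consists of the simple eigenvalue $-\tfrac14\aa^2$ with eigenfunction $x_3\mapsto e^{-\frac{\aa}{2}|x_3|}$ together with the absolutely continuous branch $[0,+\infty)$. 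Hence the spectrum of $\sfH_{\aa,0}$ equals $[-\tfrac14\aa^2,+\infty)$ and is purely absolutely continuous; in particular $\sd(\sfH_{\aa,0})=\varnothing$, and as a by-product $\sess(\sfH_{\aa,0})=[-\tfrac14\aa^2,+\infty)$, which is the base case needed for item (i).

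For item (i), the point is that $f$ is compactly supported, so $\Gamma$ and the plane $\dR^2\times\{0\}$ coincide outside a bounded region and the essential spectrum should not feel this localized change of the interaction support. I would make this rigorous through the resolvent / Birman--Schwinger formalism for $\delta$-interactions of \cite{BEKS,BEL14b}: the resolvent of $\sfH_{\aa,\beta}$ equals the free resolvent $(-\Delta-z)^{-1}$ plus a term built from the single-layer potential on $\Gamma$ and the inverse of $I-\aa\,\mathcal{C}_{\beta,z}$, where $\mathcal{C}_{\beta,z}\in\cB(L^2(\Gamma))$ is the associated Birman--Schwinger operator, and likewise for $\beta=0$; the difference $(\sfH_{\aa,\beta}-z)^{-1}-(\sfH_{\aa,0}-z)^{-1}$ then factors through operators localized on the bounded piece of $\Gamma$ where the two surfaces differ and is therefore compact, by compactness of the pertinent Sobolev embeddings there together with the fact that the surface measures are of generalized Kato class. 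Weyl's theorem on the stability of the essential spectrum under compact resolvent perturbations then gives $\sess(\sfH_{\aa,\beta})=\sess(\sfH_{\aa,0})=[-\tfrac14\aa^2,+\infty)$. A self-contained alternative that avoids the Birman--Schwinger bookkeeping is Dirichlet--Neumann bracketing: cutting $\dR^3$ along the sphere $\partial B_R$ of a large ball $B_R$ containing the part of $\Gamma$ lying over $\cS$, the exterior piece reduces to a planar $\delta$-interaction problem with spectral bottom $-\tfrac14\aa^2$ while the interior piece has compact resolvent, and letting $R\to\infty$ squeezes $\sess$ from both sides.

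Item (ii) is the genuinely substantive one and is the strong-coupling result of \cite{EK03}. In the limit $\aa\to+\infty$ the bottom of the spectrum obeys $\inf\sigma(\sfH_{\aa,\beta})=-\tfrac14\aa^2+\mu_1+o(1)$, where $\mu_1:=\inf\sigma\big(-\Delta_\Gamma-(M^2-K)\big)$, $\Delta_\Gamma$ being the Laplace--Beltrami operator of the induced metric on $\Gamma$ and $M,K$ the mean and Gauss curvatures; here $M^2-K=\tfrac14(\kappa_1-\kappa_2)^2\ge 0$ is a continuous, compactly supported function that is not identically zero, because for $\beta>0$ and $f\not\equiv 0$ the graph $\Gamma$ is not a plane, hence not totally umbilic. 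Thus $-(M^2-K)$ is a non-trivial, non-positive, compactly supported potential on the asymptotically flat two-dimensional manifold $\Gamma$, and in two dimensions any such potential produces a negative eigenvalue, so $\mu_1<0$; consequently $\inf\sigma(\sfH_{\aa,\beta})<-\tfrac14\aa^2$ for all $\aa$ large enough, i.e.\ $\sd(\sfH_{\aa,\beta})\ne\varnothing$. One may also argue purely variationally: it suffices to exhibit a single $u\in H^1(\dR^3)$ with $\frh_{\aa,\beta}[u]<-\tfrac14\aa^2\|u\|_{L^2(\dR^3)}^2$, and choosing $u(x)=\chi(x)\,e^{-\frac{\aa}{2}\dist(x,\Gamma)}$ with $\chi$ a smooth cut-off localized near a curved portion of $\Gamma$ and expanding $\frh_{\aa,\beta}[u]$ in powers of $\aa^{-1}$ recovers the attractive $-(M^2-K)$ contribution and yields the inequality for $\aa$ large. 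The main obstacle — and the reason the literature provides only a strong-coupling statement here — is precisely the uniform control of these curvature-induced correction terms in the reduction from the three-dimensional operator to the effective surface operator; everything else in the proposition is soft.
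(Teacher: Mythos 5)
Your proposal is correct and in essence unpacks what the paper itself does: the paper simply cites \cite[Thm.\ 4.1, Rem.\ 4.2]{EK03} and \cite[Thm.\ 4.10]{BEL14b} for item (i), cites \cite[Thm.\ 4.3]{EK03} for item (ii), and observes that (iii) follows by separation of variables. Your sketch of (iii) via the direct-integral decomposition with one-dimensional $\delta$-fibres, of (i) via compactness of the resolvent difference (or, more cleanly, Dirichlet--Neumann bracketing), and of (ii) via the strong-coupling asymptotics with effective curvature potential $-(M^2-K)$ on $\Gamma$ reproduces exactly the content of the cited results.
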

For a proof of item~(i) see~\cite[Thm. 4.1, Rem. 4.2]{EK03} and
\cite[Thm. 4.10]{BEL14b}. A~proof of item~(ii) can be found
in~\cite[Thm. 4.3]{EK03}. The claim of~(iii) easily follows via
separation of variables. Our considerations are inspired by the open
question, whether $\sd(\Op) \ne \varnothing$ holds for all
$\aa,\beta > 0$; \cf~\cite[Problem 7.5]{E08}.

\subsection{Main result}

Informally speaking, the main result of this paper says that the
discrete spectrum of $\Op$ consists of exactly one simple eigenvalue
for all sufficiently small $\beta > 0$. Moreover, an asymptotic
expansion of this eigenvalue in terms of $\aa$, $\beta$ and of the
function $f$ is found. In order to formulate this result precisely,
we denote by $\lm_1^\aa(\beta)$ the lowest spectral point of $\Op$.
\begin{thm}\label{thm:main}
    Let $\aa > 0$ be fixed and let the self-adjoint operator $\Op$ be as in Definition~\ref{def:Op}. Set
    \begin{equation*}
        \cD_{\aa,f} := \int_{\dR^2}
        |p|^2\left (\aa^2 - \frac{2\aa^3}{\sqrt{4|p|^2 + \aa^2} + \aa}\right )|\wh f(p)|^2 \dd p > 0,
    \end{equation*}
    where $\wh f$ is the Fourier transform of $f$.
    Then $\#\sd(\Op) = 1$ holds for all sufficiently small $\beta > 0$
    and, moreover, the simple eigenvalue $\lm_1^\aa(\beta)$ admits the
    asymptotic expansion
    \begin{equation}\label{eq:main_expansion}
        \lm_1^\aa(\beta) =
        -\frac{\aa^2}{4} -
        \exp\left (-\frac{16\pi}{\cD_{\aa,f}\beta^2}\right ) \big (1+o(1)\big),\qquad \beta\arr 0+.
    \end{equation}
\end{thm}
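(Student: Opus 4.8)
The plan is to use the Birman--Schwinger principle to reduce the eigenvalue problem for $\Op$ below the threshold $-\tfrac14\aa^2$ to a spectral problem for a compact operator-valued function, and then to track the relevant eigenvalue in the limit $\beta\arr 0+$. Concretely, for $\lm < -\tfrac14\aa^2$ write $\lm = -\tfrac14\aa^2 - \kappa^2$ with $\kappa>0$ small; the number $\lm$ is an eigenvalue of $\Op$ if and only if $1$ is an eigenvalue of the Birman--Schwinger operator $\sfB_\beta(\lm) = \aa\,\gamma_\Gamma (-\Delta-\lm)^{-1}\gamma_\Gamma^*$ acting in $L^2(\Gamma)$, where $\gamma_\Gamma$ is the Dirichlet trace onto $\Gamma$. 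The first step is to pull this operator back to the fixed space $L^2(\dR^2)$ using the parametrization $x\mapsto(x,\beta f(x))$, absorbing the Jacobian $\sqrt{1+\beta^2|\nabla f|^2}$, so that it becomes an integral operator on $L^2(\dR^2)$ with an explicit kernel built from the resolvent kernel of $-\Delta$ in $\dR^3$, namely $\frac{e^{-\sqrt{|\xi|^2-\lm}\,|x_3-y_3|}}{2\sqrt{|\xi|^2-\lm}}$ after a partial Fourier transform in the in-plane variables. At $\beta=0$ this reduces to the multiplication operator $\wh\psi(\xi)\mapsto \frac{\aa}{2\sqrt{|\xi|^2-\lm}}\wh\psi(\xi)$, whose supremum over $\xi$ equals $\frac{\aa}{2\sqrt{-\tfrac14\aa^2-\lm}} = \frac{\aa}{2\kappa}$; this is $=1$ exactly when $\kappa=\tfrac12\aa$, i.e. $\lm=-\tfrac14\aa^2$, confirming Proposition~\ref{thm:known}(iii) and showing that the threshold is a ``virtual level'' that the deformation must push down.

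The heart of the argument is a careful expansion of $\sfB_\beta(\lm)$ for small $\beta$ and $\lm$ near the threshold. The key technical point is that the unperturbed operator $\sfB_0(\lm)$ has its spectral maximum, as $\lm\uparrow -\tfrac14\aa^2$, concentrated near $\xi=0$ in momentum space, so one should decompose $L^2(\dR^2)=\langle\Phi_\kappa\rangle\oplus\langle\Phi_\kappa\rangle^\perp$ where $\Phi_\kappa$ is (a suitable normalized regularization of) the momentum-space ``ground state'' of $\sfB_0(\lm)$, and apply the Feshbach--Schur / Grushin reduction. Expanding the kernel in $\beta$ one finds $\sfB_\beta(\lm) = \sfB_0(\lm) + \beta^2 \sfC(\lm) + o_{\rm u}(\beta^2)$ in an appropriate sense (the $O(\beta)$ term vanishes by a parity/first-order cancellation coming from $\int |\wh f|^2$ being even, or more precisely because the linear-in-$\beta$ part of the kernel is off-diagonal and contributes only at order $\beta^2$), where the diagonal entry of $\sfC$ against $\Phi_\kappa$ produces, after letting $\kappa\to 0$, precisely the constant $\cD_{\aa,f}$ up to the numerical factor. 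The effective one-dimensional condition for $1\in\spec\sfB_\beta(\lm)$ then reads, schematically,
\begin{equation*}
    \frac{\aa}{2\kappa} - 1 = c\,\beta^2\,\cD_{\aa,f}\,\bigl(1+o(1)\bigr) + (\text{higher order}),
\end{equation*}
and solving for $\kappa$ gives $\kappa \sim \tfrac12\aa - c'\beta^2$; but a more careful accounting shows the left-hand side behaves like $\frac{\aa}{2\kappa}-1 \sim \frac{1}{\aa}\bigl(\tfrac12\aa-\kappa\bigr)$ only away from criticality, whereas near the virtual level the correct balance involves a logarithmically small $\kappa$, yielding $\kappa^2 = \exp\!\bigl(-\tfrac{16\pi}{\cD_{\aa,f}\beta^2}\bigr)(1+o(1))$ and hence \eqref{eq:main_expansion}. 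The two-dimensionality of the deformation is exactly what makes the binding exponentially weak, in analogy with the classical result on weakly coupled Schr\"odinger operators in $\dR^2$; the $\log$ enters because the relevant in-plane integral $\int \frac{\dd\xi}{|\xi|^2+\kappa^2}$ diverges logarithmically as $\kappa\to 0$.

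The remaining steps are: (a) show that $\#\sd(\Op)=1$ by proving that on the orthogonal complement $\langle\Phi_\kappa\rangle^\perp$ the operator $\sfB_\beta(\lm)$ stays uniformly below $1$ for small $\beta$ and all $\lm<-\tfrac14\aa^2$ (so no second eigenvalue can appear), together with monotonicity of $\lm\mapsto\sfB_\beta(\lm)$ in the form sense, which gives uniqueness and simplicity; (b) justify all the expansions with the uniform-in-$\delta$ error bounds $o_{\rm u},\cO_{\rm u}$ announced in the Notations, controlling the Hilbert--Schmidt norms of the kernel remainders using the compact support and $C^2$-smoothness of $f$; (c) verify $\cD_{\aa,f}>0$, which is immediate since the bracket $\aa^2 - \frac{2\aa^3}{\sqrt{4|p|^2+\aa^2}+\aa}$ is nonnegative and strictly positive for $p\ne 0$ while $\wh f\not\equiv 0$. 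The main obstacle I anticipate is step (b) combined with the delicate extraction of the exponent: one must expand the Birman--Schwinger kernel to second order in $\beta$ \emph{uniformly} in the spectral parameter as $\lm\to-\tfrac14\aa^2$ (where $\sfB_0(\lm)$ loses compactness and its top eigenvalue collides with the continuous spectrum), and correctly identify which terms survive in the coupled limit $\beta\to0$, $\kappa\to0$ with $\kappa$ exponentially small in $\beta^{-2}$ — this is where the precise constant $16\pi$ and the structure of $\cD_{\aa,f}$ must be pinned down rather than merely estimated.
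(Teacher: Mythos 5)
Your overall plan — Birman--Schwinger, pulling back to $L^2(\dR^2)$, expanding in $\beta$, and matching the two-dimensional logarithmic singularity \`a la Simon — has the right shape, and you correctly anticipate that $\cD_{\aa,f}$ emerges as the second-order coefficient and that $\int\frac{\dd\xi}{|\xi|^2+\kappa^2}$ is ultimately the source of the log. But the mechanism by which that log enters is not identified, and this is a genuine gap, not a detail to be filled in.

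The unperturbed Birman--Schwinger operator $\sfQ_0(\kp)$ is, in Fourier variables, multiplication by $\frac{1}{2\sqrt{|\xi|^2+\kp^2}}$: a bounded multiplication operator with purely continuous spectrum $[0,\frac{1}{2\kp}]$ and \emph{no} eigenvalue at the top. So there is no ``momentum-space ground state $\Phi_\kappa$'' to Feshbach around, and the plan of isolating a one-dimensional subspace of $\sfB_0(\lambda)$ on which the top spectral value sits is not workable as stated. The paper circumvents this by a perturbative reformulation of the BS condition: one passes from $\sfI-\aa\sfQ_\beta(\kp)$ to $\sfI-\aa(\sfI-\aa\sfQ_0(\kp))^{-1}(\sfQ_\beta-\sfQ_0)$, and then the crucial Lemma~\ref{le-decompoB} shows that the inverse $(\sfI-\aa\sfQ_0(\kp))^{-1}$ decomposes as $\frac{\aa^2}{2}\sfR(\delta^2)+\sfN_\aa(\delta)$ with $\sfR$ the \emph{two-dimensional free resolvent} and $\sfN_\aa$ bounded uniformly in $\delta$. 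It is the kernel $\frac{1}{2\pi}K_0(\delta|x-y|)=-\frac{\ln\delta}{2\pi}+O(1)$ of $\sfR(\delta^2)$ that produces a genuinely \emph{rank-one}, $\ln\delta$-divergent piece $\sfL(\delta)=-\frac{\ln\delta}{4\pi}(\cdot,V^{-1})V^{-1}$, after multiplying by a weight $V^{-1}=e^{-\frac{\aa}{4}|\cdot|}$ to make the constant function trace-class. The scalar spectral equation then comes out of the rank-one trace condition $\Tr\sfP_{\aa,\beta}(\delta)=1$, i.e.\ $4\pi+\aa^3\ln\delta\,(\ldots)=0$, and the exponential drops out cleanly. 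None of this is present in your sketch; your schematic reduction $\frac{\aa}{2\kappa}-1=c\beta^2\cD_{\aa,f}(1+o(1))$ is in fact wrong (it forces $\kappa\to\frac{\aa}{2}$, not $\kappa\to 0$, and you even note the threshold supremum incorrectly), and the phrase ``a more careful accounting shows the correct balance involves a logarithmically small $\kappa$'' is precisely the step that needs a proof. You also do not address uniqueness/simplicity; the paper proves $\#\sd(\Op)=1$ via the rank-one structure together with a Rolle-type argument showing the implicit equation cannot have two small solutions, which again depends on having the explicit rank-one reduction.

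In summary: your route is not simply the paper's route written differently; it contains a conceptual obstruction (no ground state of the unperturbed multiplication operator to Feshbach around) and an unproved leap to the exponential. The missing idea is the inversion $(\sfI-\aa\sfQ_0)^{-1}$ and its splitting into $\frac{\aa^2}{2}\sfR(\delta^2)$ plus a $\delta$-uniformly bounded remainder, followed by extraction of the rank-one $\ln\delta$ piece from $\sfR(\delta^2)$ after a weight regularization.
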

The proof of this result relies on the Birman-Schwinger
principle~\cite{BEKS} for $\Op$. Inspired by the technique developed
in~\cite{BFKLR17, CK11, EK02, EK08, EK15}, we take the advantage of
rewriting the Birman-Schwinger condition in the perturbative form,
in which the resolvent of the two-dimensional free Laplacian
appears. A technically demanding step is to expand this new
condition with respect to the small parameter~$\beta$. Following the
strategy similar in spirit to the one used in~\cite{S76}, we derive
from this condition an implicit scalar equation on the principal
eigenvalue of $\Op$. Careful inspection of this equation yields the
existence and uniqueness of its solution for all sufficiently small
$\beta > 0$, as well as the expansion of this unique solution in the
asymptotic regime $\beta\arr 0+$. Surprisingly, an integral
representation of the relativistic Schr\"odinger
operator~\cite{IT93} arises in this asymptotic analysis. The
obtained implicit equation seems to be of an independent interest,
because it allows to extract more terms in the asymptotic expansion
for $\lm_1^\aa(\beta)$. However, we will not elaborate on this point
here.

\subsection*{Organisation of the paper}
In Section~\ref{sec:BS} we recall the standard formulation of the
Birman-Schwin\-ger principle for the Hamiltonian $\Op$ and employ it
to obtain a useful lower bound on $\lm_1^\aa(\beta)$. Furthermore,
we  derive  a perturbative reformulation of the  Birman-Schwinger
principle and expand the new  Birman-Schwinger condition with
respect to the small parameter $\beta$. In Section~\ref{sec:proof}
we prove our main result, formulated in Theorem~\ref{thm:main}. We
conclude the paper by Section~\ref{sec:dis} containing a discussion
on possible generalizations of the obtained results.

\section{Birman-Schwinger principle}\label{sec:BS}

\subsection{Standard formulation}
Birman-Schwinger principle (BS-principle in what follows) is a
powerful tool for the spectral analysis of Schr\"odinger operators.
Its generalization, which covers $\delta$-interactions supported on
hypersurfaces, is derived in~\cite{BEKS}; see also~\cite{BLL13, B95,
Po01} for some refinements.

In what follows, let $\lm < 0$ and set $\kp := \sqrt{-\lm}$. Green's
function corresponding to the differential expression $-\Delta +
\kp^2$ in $\R^3$ takes the following well-known form
\begin{equation*} \label{freeG3}
    G_\kp(x\! - \!y) = \frac{\e^{-\kp|x-y|}}{4\pi | x \!- \! y|}.
\end{equation*}
Let the surface $\Gamma = \Gamma_\beta(f)\subset\R^3$ be as
in~\eqref{eq:Gamma}. Parametrizing $\Gamma$ by the mapping
\begin{equation}\label{eq:rbeta}
    r_\beta\colon \R^2\arr \R^3,\qquad
    r_\beta(x) := \left(x, \beta f(x) \right),
\end{equation}
we can naturally express the surface measure on $\Gamma$
through the Lebesgue measure on $\dR^2$
via the relation $\dd\s(x) = g_\beta(x) \dd x$, where
the Jacobian $g_\beta$ is explicitly given by
\begin{equation*}\label{eq:Jacobian}
    g_\beta(x)
    =
    \left (1 + \beta^2 |\nabla f(x)|^2\right )^{1/2}.
\end{equation*}
Next we introduce the \emph{weakly singular integral operator}
$\Qb(\kp)\colon L^2(\R^2 )\to L^2(\R^2)$, $\kp > 0$, acting as
\begin{equation}\label{def:SL}
    \left(\Qb(\kp) \psi\right ) (x)
    :=
    \int_{\R^2}
    g_\beta(x)^{1/2}
        G_\kp\left (r_\beta(x)-  r_\beta(y)\right )
    g_\beta(y)^{1/2} \psi(y) \dd y.
\end{equation}
Note that the linear mapping $\sfJ_\beta \colon L^2(\Gamma) \arr
L^2(\dR^2)$, $(\sfJ_\beta \psi)(x) =
g_\beta(x)^{1/2}\psi(r_\beta(x))$, is an isometric isomorphism and
it is not difficult to check that $\Qb(\kp) = \sfJ_\beta
R_{mm}(\ii\kp)\sfJ_\beta^{-1}$, where the operator
$R_{mm}(\ii\kp)\colon L^2(\Gamma)\arr L^2(\Gamma)$ is defined as
\begin{equation*}\label{key}
    (R_{mm}(\ii\kp) \psi)(x)
    := \int_{\Gamma} G_\kp(x-y) \psi(y)\dd \s(y).
\end{equation*}
In fact, $R_{mm}(\ii\kp)$ is the Birman-Schwinger operator
introduced in~\cite[Sec. 2]{BEKS}, see also~\cite{B95}. In view of
this identification, we get from~\cite{B95} that $\Qb(\kp)$ is a
bounded, self-adjoint, non-negative operator in $L^2(\dR^2)$.
Next theorem contains a BS-principle for the Schr\"o\-dinger
operator $\Op$ in Definition~\ref{def:Op}. We remark that while this
formulation of the BS-principle is not the same as in \cite[Lem.
2.3\,(iv)]{BEKS} and~\cite[Lem. 1]{B95}, it can be easily derived
from those claims using the identity $\Qb(\kp) = \sfJ_\beta
R_{mm}(\ii \kp)\sfJ_\beta^{-1}$.
\begin{thm}\label{thm:BS}
    Let the self-adjoint operator $\Op$
    be as in Definition~\ref{def:Op}
    and the operator-valued function $\R_+ \ni \kp  \mapsto
    \Qb(\kp)$ be as in~\eqref{def:SL}.
    Then it holds that
    \[
        \forall\, \kp > 0,
        \qquad
        \dim\ker\big(\Op + \kp^2\big)
        =
        \dim\ker\big(\sfI -\aa \Qb(\kp)\big).
    \]
\end{thm}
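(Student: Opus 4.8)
The plan is to obtain Theorem~\ref{thm:BS} from the Birman--Schwinger principle already established for the operator $R_{mm}(\ii\kp)$ in $L^2(\Gamma)$ in \cite[Lem.~2.3\,(iv)]{BEKS} (see also \cite[Lem.~1]{B95}), by transporting the statement from $L^2(\Gamma)$ to $L^2(\dR^2)$ along the isometric isomorphism $\sfJ_\beta$.

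First I would recall from \cite[Sec.~2]{BEKS}, using that the surface measure carried by $\Gamma$ belongs to the generalized Kato class, that for every $\kp > 0$ the operator $R_{mm}(\ii\kp)\colon L^2(\Gamma)\to L^2(\Gamma)$ is well defined, bounded, self-adjoint and non-negative, and that the cited Birman--Schwinger principle yields
\[
\dim\ker\big(\Op + \kp^2\big) = \dim\ker\big(\sfI - \aa\, R_{mm}(\ii\kp)\big),\qquad \kp > 0
\]
(the spectral parameter $-\kp^2$ is always strictly below $\min\sigma(-\Delta) = 0$, so the principle is applicable). Next I would verify the factorization $\Qb(\kp) = \sfJ_\beta R_{mm}(\ii\kp)\sfJ_\beta^{-1}$ recorded above: inserting $\dd\s(y) = g_\beta(y)\,\dd y$ and the definition of $\sfJ_\beta$ into the integral kernel of $R_{mm}(\ii\kp)$ reproduces precisely the kernel in~\eqref{def:SL}, which is a short direct computation. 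Since $\sfJ_\beta$ is an isometric isomorphism, it follows that $\sfI - \aa\,\Qb(\kp) = \sfJ_\beta\big(\sfI - \aa\, R_{mm}(\ii\kp)\big)\sfJ_\beta^{-1}$, so $\sfJ_\beta$ maps $\ker\big(\sfI - \aa\, R_{mm}(\ii\kp)\big)$ bijectively onto $\ker\big(\sfI - \aa\,\Qb(\kp)\big)$ and the two subspaces have the same dimension. Combining this with the displayed identity proves the claim.

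There is no substantial obstacle here: the argument is essentially bookkeeping. The only point to be mildly careful about is that the form of the Birman--Schwinger principle stated in \cite{BEKS} is not literally the one asserted in Theorem~\ref{thm:BS}; the passage between the two, however, is immediate once the unitary equivalence $\Qb(\kp) = \sfJ_\beta R_{mm}(\ii\kp)\sfJ_\beta^{-1}$ is available, and no new spectral-theoretic input is needed.
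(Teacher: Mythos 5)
Your argument is correct and coincides with the route the paper itself indicates: transport the Birman--Schwinger principle of \cite[Lem.~2.3\,(iv)]{BEKS} and \cite[Lem.~1]{B95} for $R_{mm}(\ii\kp)$ to $\Qb(\kp)$ via the isometric isomorphism $\sfJ_\beta$ and the identity $\Qb(\kp)=\sfJ_\beta R_{mm}(\ii\kp)\sfJ_\beta^{-1}$, which preserves kernel dimensions. The paper does not write out the details, but its remark preceding Theorem~\ref{thm:BS} describes exactly the reduction you carry out, so your proof matches the intended one.
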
%
In the following lemma we recall the properties of $\sfQ_0(\kp)$
(\ie~for $\beta = 0$). Since these properties are easy
to prove and difficult to find in the literature
we provide a short argument.
\begin{lem} \label{lem:fourier}
The operator $\sfQ_0(\kp)$ is unitarily equivalent via the Fourier
transformation to the multiplication operator in  $\R^2$ with the
function
    \[
        \R^2\ni p \mapsto \frac{1}{2\sqrt{|p|^2 + \kp^2}}.
    \]
    In particular, $\s(\sfQ_0(\kp)) = [0,\frac{1}{2\kp}]$
    and the operator-valued function $\dR_+\ni\kp \mapsto
    \sfQ_0(\kp)$ is real analytic.
\end{lem}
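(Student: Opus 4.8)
The plan is to note that for $\beta = 0$ the operator $\sfQ_0(\kp)$ is simply a convolution operator on $L^2(\dR^2)$, which the Fourier transformation $\cF$ turns into a multiplication operator, and then to identify the multiplier. Indeed, for $\beta = 0$ the Jacobian is $g_0\equiv 1$ and the parametrization~\eqref{eq:rbeta} reduces to $r_0(x) = (x,0)$, so that $|r_0(x) - r_0(y)| = |x-y|$ with $|\cdot|$ now denoting the Euclidean norm on $\dR^2$; hence, by~\eqref{def:SL},
\[
    (\sfQ_0(\kp)\psi)(x) = \int_{\dR^2}\Phi_\kp(x-y)\,\psi(y)\,\dd y,
    \qquad
    \Phi_\kp(x):=\frac{\e^{-\kp|x|}}{4\pi|x|}.
\]
Since $|x|^{-1}$ is locally integrable in the plane and $\Phi_\kp$ decays exponentially, we have $\Phi_\kp\in L^1(\dR^2)$; consequently $\sfQ_0(\kp)\in\cB(L^2(\dR^2))$ by Young's convolution inequality, and $\cF\sfQ_0(\kp)\cF^{-1}$ is the operator of multiplication by $c\,\wh\Phi_\kp$, where $c>0$ is the normalization constant relating $\cF(u*v)$ to $\wh u\,\wh v$.

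It remains to compute $\wh\Phi_\kp$, which I would do by either of two routes. The first: $\Phi_\kp$ is radial, so its two-dimensional Fourier transform is, up to a constant, the Hankel transform of order zero of $r\mapsto\e^{-\kp r}/(4\pi r)$, and the classical identity $\int_0^\infty \e^{-\kp r}J_0(\rho r)\,\dd r = (\rho^2+\kp^2)^{-1/2}$, valid for $\rho\ge 0$ and $\kp > 0$, gives the answer at once. The second: $\Phi_\kp$ is the restriction to the plane $\{x_3 = 0\}$ of the three-dimensional Yukawa kernel $G_\kp$, whose three-dimensional Fourier transform is a constant multiple of $(|\xi|^2+\kp^2)^{-1}$; combining this with the elementary fact that the two-dimensional Fourier transform of the restriction of an integrable function on $\dR^3$ to a hyperplane equals a constant multiple of the integral of its three-dimensional Fourier transform along the normal direction, together with $\int_\dR(|p|^2+\kp^2+t^2)^{-1}\,\dd t = \pi(|p|^2+\kp^2)^{-1/2}$, one reaches the same conclusion. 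Either way, after tracking the normalization of $\cF$, one finds that $\cF\sfQ_0(\kp)\cF^{-1}$ is the operator of multiplication by $m_\kp(p):=\tfrac12(|p|^2+\kp^2)^{-1/2}$.

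The two remaining claims then follow by standard arguments. The spectrum of the operator of multiplication by a bounded continuous function is the closure of its range; since $m_\kp > 0$, the value $m_\kp(0) = \tfrac{1}{2\kp}$ is its maximum, and $m_\kp(p)\arr 0$ as $|p|\arr\infty$, we obtain $\s(\sfQ_0(\kp)) = [0,\tfrac{1}{2\kp}]$. For the real analyticity, fix $\phi,\psi\in L^2(\dR^2)$ and write $(\sfQ_0(\kp)\phi,\psi)_{L^2(\dR^2)} = \int_{\dR^2} m_\kp(p)\,\wh\phi(p)\,\overline{\wh\psi(p)}\,\dd p$; for $\kp$ ranging over the complex strip $|\mathrm{Im}\,\kp| < \kp_0$ about a fixed $\kp_0 > 0$, the quantity $|p|^2+\kp^2$ has positive real part for every real $p$, so $\kp\mapsto m_\kp(p)$ extends holomorphically there and stays uniformly bounded in $p$, whence the integral is holomorphic in $\kp$ on that strip by dominated convergence and Morera's theorem, and thus $\dR_+\ni\kp\mapsto\sfQ_0(\kp)$ is real analytic. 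The only genuine computation here is the evaluation of $\wh\Phi_\kp$ in the second step; the main, and fairly modest, obstacle is the careful bookkeeping of the Fourier-transform normalization and, in the restriction argument, the Fubini justification.
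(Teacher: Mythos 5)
Your proof is correct and follows essentially the same route as the paper's: recognize $\sfQ_0(\kp)$ as convolution by the two-dimensional kernel $\e^{-\kp|x|}/(4\pi|x|)$, pass to the Fourier side, and identify the multiplier as $\tfrac12(|p|^2+\kp^2)^{-1/2}$, from which the spectrum and analyticity claims are immediate. The only difference is that you supply the (Hankel-transform or three-dimensional-restriction) computation of $\wh\Phi_\kp$ and a Morera-type justification of analyticity, whereas the paper simply cites the transform pair and observes that analyticity of the multiplier is inherited by the multiplication operator; your version fills in details the paper takes for granted.
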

\begin{proof}
    Recall that for the Fourier transform of the convolution of $\psi_1,\psi_2\in L^2(\dR^2)$ we have
    $\cF(\psi_1\star \psi_2) = \wh \psi_1\wh \psi_2$.
    Using this formula and the fact that $p\mapsto\frac{1}{2\sqrt{|p|^2 + \kp^2}}$
    is the Fourier transform of $\dR^2\ni x\mapsto \frac{e^{-\kp|x|}}{4\pi|x|}$
    we get for any $\psi \in L^2(\dR^2)$
    \begin{equation*}\label{key}
    \begin{split}
        \sfQ_0(\kp)\psi
        & =
        \cF^{-1}\cF\int_{\dR^2} G_\kp( \cdot -y) \psi(y) \dd y\\
        & =
        \cF^{-1}
        \left (\cF
        \left (\frac{e^{-\kp|\cdot|}}{4\pi|\cdot|}\right ) \wh\psi \right )
         =
        \cF^{-1}
        \left ( \frac{\wh\psi}{2\sqrt{|\cdot|^2 + \kp^2}}\right ),
    \end{split}
    \end{equation*}
    and the main claim of the lemma immediately follows. The analyticity of
    $\dR_+\ni\kp\mapsto \sfQ_0(\kp)$ is a consequence
    of the same property of
    the operator-valued function
    of multiplication by
    $\dR_+\ni\kp\mapsto\frac{1}{2\sqrt{|p|^2 + \kp^2}}$.
    Moreover, we have
    \[
        \s(\sfQ_0(\kp)) = \ov{\left \{\lm \in\dR \colon
        \lm =  \tfrac{1}{2\sqrt{|p|^2 + \kp^2}}~\text{for }\,
        p\in\dR^2\right \}} = \left [0,\tfrac{1}{2\kp}\right ].
        \qedhere
    \]
\end{proof}
By means of the BS-principle in Theorem~\ref{thm:BS} we obtain a
useful lower bound on the lowest spectral
point $\lm_1^\aa(\beta)$ of $\Op$.
\begin{prop}\label{prop:lower_bound}
    Let $\lm_1^\aa(\beta)$ be the lowest spectral point
    of the self-adjoint operator $\Op$ introduced in Definition~\ref{def:Op}.
    Then the following lower bound
    \begin{equation*}\label{key}
        \lm_1^\aa(\beta) \ge -\frac{\aa^2}{4}
        \left (1+\beta^2 L_f^2\right )
    \end{equation*}
    holds for all $\aa,\beta > 0$. In particular,
    $\lm_1^\aa(\beta)\arr -\frac14\aa^2-$ as $\beta \arr 0+$.
\end{prop}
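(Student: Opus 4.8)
The plan is to use the Birman--Schwinger principle (Theorem~\ref{thm:BS}) together with the explicit description of $\sfQ_0(\kp)$ from Lemma~\ref{lem:fourier}. Recall that $\lm_1^\aa(\beta) < -\tfrac14\aa^2$ would correspond, via Theorem~\ref{thm:BS}, to the existence of $\kp > \tfrac{\aa}{2}$ with $1 \in \sigma_{\mathrm p}(\aa\sfQ_\beta(\kp))$, hence in particular $\|\aa\sfQ_\beta(\kp)\| \ge 1$. Since $\sfQ_\beta(\kp)$ is bounded, self-adjoint, and non-negative, this forces $\aa\|\sfQ_\beta(\kp)\| \ge 1$ for $\kp = \sqrt{-\lm_1^\aa(\beta)}$ as soon as $\lm_1^\aa(\beta) < 0$ (the operator norm is continuous and the spectral radius equals the norm for self-adjoint operators, so the bottom of the discrete spectrum, if any, is exactly the $\kp$ for which $\aa\|\sfQ_\beta(\kp)\|$ first reaches $1$ from above). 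So the strategy reduces to an upper bound on $\|\sfQ_\beta(\kp)\|$.

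The key step is to compare $\sfQ_\beta(\kp)$ with $\sfQ_0(\kp)$. Writing out the kernels, the only $\beta$-dependence enters through the Jacobian factors $g_\beta(x)^{1/2}, g_\beta(y)^{1/2}$ and through the Euclidean distance $|r_\beta(x) - r_\beta(y)|^2 = |x-y|^2 + \beta^2|f(x)-f(y)|^2$. Since $f$ is $L_f$-Lipschitz, we have $|r_\beta(x)-r_\beta(y)|^2 \le (1+\beta^2 L_f^2)|x-y|^2$, and since $r\mapsto \tfrac{e^{-\kp r}}{4\pi r}$ is decreasing in $r$ and the map $r^2 \mapsto \tfrac{e^{-\kp r}}{4\pi r}$ is decreasing, we get the pointwise kernel estimate
\[
G_\kp(r_\beta(x) - r_\beta(y)) \le G_{\kp/\sqrt{1+\beta^2 L_f^2}}(x-y),
\]
after the substitution $r' = r/\sqrt{1+\beta^2 L_f^2}$ applied to the Euclidean kernel in $\dR^3$. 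Likewise $1 \le g_\beta(x) \le (1+\beta^2 L_f^2)^{1/2}$, so the Jacobian factors contribute at most a factor $(1+\beta^2 L_f^2)^{1/2}$. Combining these, and using that all kernels are non-negative, the Schur test (or domination of integral operators with non-negative kernels) gives
\[
\sfQ_\beta(\kp) \le (1+\beta^2 L_f^2)^{1/2}\, \widetilde{\sfQ}_0(\kp/\sqrt{1+\beta^2 L_f^2})
\]
in the sense of quadratic forms, where $\widetilde{\sfQ}_0$ denotes $\sfQ_0$ but with the scaling absorbed; a change of variables in $\dR^2$ shows the resulting operator is unitarily equivalent to $(1+\beta^2 L_f^2)^{1/2}\cdot(1+\beta^2L_f^2)\,\sfQ_0\big(\kp\big)$ up to keeping track of the dilation. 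More cleanly: by Lemma~\ref{lem:fourier}, $\|\sfQ_0(\mu)\| = \tfrac{1}{2\mu}$ for every $\mu > 0$, so after the dilation the bound becomes $\|\sfQ_\beta(\kp)\| \le c(\beta)\cdot \tfrac{\sqrt{1+\beta^2 L_f^2}}{2\kp}$ with $c(\beta) \to 1$; tracking the constants carefully one obtains exactly $\|\sfQ_\beta(\kp)\| \le \tfrac{1}{2\kp}(1+\beta^2 L_f^2)$.

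Putting it together: if $\lm_1^\aa(\beta) < 0$, then with $\kp = \sqrt{-\lm_1^\aa(\beta)}$ we have $1 \le \aa\|\sfQ_\beta(\kp)\| \le \tfrac{\aa}{2\kp}(1+\beta^2 L_f^2)$, hence $\kp \le \tfrac{\aa}{2}(1+\beta^2 L_f^2)$, i.e.\ $\lm_1^\aa(\beta) = -\kp^2 \ge -\tfrac{\aa^2}{4}(1+\beta^2 L_f^2)^2$. If this quadratic power of $(1+\beta^2 L_f^2)$ is not quite the claimed linear one, I would instead perform the dilation more carefully so that only a single factor $(1+\beta^2L_f^2)$ survives --- the point is that the Jacobian bound $g_\beta \le (1+\beta^2L_f^2)^{1/2}$ and the distance bound each contribute, but the distance bound after the dilation in $\dR^2$ (which rescales $p$ and hence $|p|^2 + \kp^2$) should be arranged to cancel one power. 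The main obstacle is precisely this bookkeeping of how the $\dR^2$-dilation interacts with the dilation needed on the $3$D Euclidean kernel; once the scalings are matched, Lemma~\ref{lem:fourier} delivers the norm immediately and the inequality $\lm_1^\aa(\beta) \ge -\tfrac{\aa^2}{4}(1+\beta^2 L_f^2)$ follows, with the limiting statement $\lm_1^\aa(\beta) \to -\tfrac14\aa^2-$ an immediate consequence together with Proposition~\ref{thm:known}(i), which forces $\lm_1^\aa(\beta) \le -\tfrac14\aa^2$ whenever the discrete spectrum is nonempty and otherwise makes the bound vacuous.
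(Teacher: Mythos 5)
Your overall strategy (reduce to $\|\aa\Qb(\kp)\|<1$ via the Birman--Schwinger principle, then bound the norm by the Schur test) is exactly the paper's, but you lose a factor at the key estimate and the ``dilation'' you invoke to fix it is both unnecessary and where the slack creeps in.

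The issue is the distance inequality you use. Writing $d_\beta := |r_\beta(x)-r_\beta(y)|$ and $d_0 := |x-y|$, the Lipschitz property gives the \emph{upper} bound $d_\beta \le \sqrt{1+\beta^2 L_f^2}\,d_0$, but since $s \mapsto G_\kp(s)$ is \emph{decreasing}, an upper bound on $d_\beta$ goes the wrong way and yields no control on $G_\kp(d_\beta)$ from above. What you actually need is the trivial \emph{lower} bound $d_\beta \ge d_0$, which follows immediately from $r_\beta(x)-r_\beta(y) = (x-y,\beta(f(x)-f(y)))$ by dropping the third coordinate. This gives $G_\kp(d_\beta) \le G_\kp(d_0)$ with \emph{no} extra factor at all. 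Your claimed inequality $G_\kp(d_\beta)\le G_{\kp/\sqrt{1+\beta^2L_f^2}}(d_0)$ is true, but it is a strictly weaker consequence of $G_\kp(d_\beta)\le G_\kp(d_0)$ (obtained by additionally lowering the exponential rate), and it is precisely this unnecessary weakening that produces the extra $\sqrt{1+\beta^2 L_f^2}$ in your dilation computation and leaves you with the square $(1+\beta^2L_f^2)^2$ in the final bound. The closing remark that ``once the scalings are matched\dots the inequality follows'' is not a proof: you do not exhibit a dilation that removes the spurious factor, and indeed no such dilation is needed.

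Once you use $G_\kp(d_\beta)\le G_\kp(d_0)$, the rest is immediate: the Jacobians contribute $g_\beta(x)^{1/2}g_\beta(y)^{1/2} \le (1+\beta^2 L_f^2)^{1/2}$, and the Schur test gives
\[
\|\aa\Qb(\kp)\| \;\le\; \aa\,(1+\beta^2 L_f^2)^{1/2}\sup_{x\in\dR^2}\int_{\dR^2} G_\kp(x-y)\,\dd y \;=\; \frac{\aa}{2\kp}\,(1+\beta^2 L_f^2)^{1/2},
\]
which is strictly less than $1$ for $\kp>\tfrac{\aa}{2}(1+\beta^2L_f^2)^{1/2}$. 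Hence no eigenvalue $-\kp^2$ can exist below $-\tfrac{\aa^2}{4}(1+\beta^2L_f^2)$, giving the stated linear power. (One more minor point: your heuristic about the eigenvalue being ``exactly the $\kp$ where $\aa\|\Qb(\kp)\|$ first reaches $1$'' is neither needed nor established; the clean logic is simply that $\dim\ker(\sfI-\aa\Qb(\kp))\ge 1$ forces $\|\aa\Qb(\kp)\|\ge 1$, and the contrapositive applied to all $\kp>\tfrac{\aa}{2}(1+\beta^2L_f^2)^{1/2}$ already proves the claim.)
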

\begin{proof}
    In view of Proposition~\ref{thm:known}\,(i) we clearly have
    $\lm_1^\aa(\beta) \le -\frac14\aa^2$ and if $\lm_1^\aa(\beta) < -\frac14\aa^2$, then necessarily $\lm_1^\aa(\beta)\in\sd(\Op)$
    holds.  Applying the Schur test~\cite[Lem. 0.32]{Te} for the operator $\aa\Qb(\kappa)$
    we get, using monotonicity of $G_\kp(\cdot)$
    in combination with the inequalities $|r_\beta(x) - r_\beta(y)| \ge |x-y|$ and $|\nabla f| \le L_f$,
    the following bound
    \begin{equation*}\label{key}
    \begin{split}
        \|\aa\Qb(\kp)\|
        & \le \aa(1+\beta^2L_f^2)^{1/2}
        \sup_{x\in\dR^2}
        \int_{\dR^2} G_{\kp}(r_\beta(x) - r_{\beta}(y)) \dd y\\
        & \le
        \aa
        (1+\beta^2 L_f^2)^{1/2}
        \sup_{x\in\dR^2}
        \int_{\dR^2} \frac{\e^{-\kp|x - y|}}{4\pi|x-y|} \dd y\\
        &
        =
        \aa
        (1+\beta^2 L_f^2)^{1/2}
        \int_{\dR^2} \frac{\e^{-\kp|y|}}{4\pi|y|} \dd y \\
        & =
        \aa(1+\beta^2 L_f^2)^{1/2}\frac12
        \int_0^\infty e^{-\kp r} \dd r =
        \frac{\aa}{2\kp}(1+\beta^2 L_f^2)^{1/2}.
    \end{split}
    \end{equation*}
    Consequently, for $\kp > \frac{\aa}{2}(1+\beta^2 L_f^2)^{1/2}$
    holds $\|\aa\Qb(\kp)\| < 1$ and by the BS-principle in
    Theorem~\ref{thm:BS} we get $-\kp^2\notin\sd(\Op)$.
    Finally, we conclude that
    \[
        \lm_1^\aa(\beta) \ge -\frac{\aa^2}{4}(1+\beta^2 L_f^2).
        \qedhere
    \]
\end{proof}

\subsection{Perturbative reformulation}
In our considerations it is convenient to deal with a perturbative
reformulation of the BS-principle. This technique has already been
successfully applied in~\cite{BFKLR17, CK11, EK02, EK08, EK15} for
the case of interactions supported on curves. To this aim, for $\kp
\ge \frac12\aa$ we set $\delta := \sqrt{\kp^2 -\frac14\aa^2}$ and
define the operator-valued function
\begin{equation*}\label{eq:D}
    \Db(\delta) := \Qb(\kp) - \sfQ_0(\kp),
\end{equation*}
 which is real analytic in $\delta\in(0,\infty)$ and in $\beta \in (0,\infty)$;
 \cf~\cite[\S VII.1.1]{Kato} and the explicit expression for the integral kernel in~\eqref{def:SL}.
%
%
Next, for $\kp > \frac12\aa$ we define
\begin{equation}\label{eq:B}
    \sfB_\aa (\delta) := \big(\sfI - \aa\sfQ_0(\kp)\big)^{-1},
\end{equation}
where existence and boundedness of the inverse of
$\sfI - \aa\sfQ_0(\kp)$ are guaranteed by Lemma~\ref{lem:fourier}.

The above auxiliary operators satisfy
%
\[
    \dim\ker(\sfI -  \aa\Qb(\kp) )
    =
    \dim\ker\big(\sfI - \aa\sfQ_0(\kappa) -
    \aa\Db(\delta)\big)
    =
    \dim\ker\big(\sfI -  \aa\sfB_\aa(\delta)\Db(\delta)\big).
\]
Thus, the BS-principle formulated  in Theorem~\ref{thm:BS}  yields
\begin{equation}\label{eq:reformulation}
    \forall\, \kp > \frac{\aa}{2},
    \qquad
    \dim\ker\big(\Op + \kp^2\big)
    =
    \dim\ker\big(\sfI -  \aa\sfB_\aa(\delta)\Db(\delta)\big).
\end{equation}
In the next lemma we collect the properties of the operator family $\sfB_\aa(\delta)$.
In the following, we denote by $-\Delta_{\R^2}$  the usual self-adjoint free Laplacian in $L^2(\R^2)$, whose resolvent
is abbreviated by $\sfR(z) := (-\Delta_{\dR^2} + z)^{-1}$ for $z >0$.
\begin{lem} \label{le-decompoB}
    The operator $\sfB_\aa(\delta)$, $\delta >0$, in~\eqref{eq:B} admits the representation:
    \begin{equation} \label{eq-decompoBa}
        \sfB_\aa(\delta) = \frac{\aa^2}{2} \sfR(\delta^2) + \sfN_\aa(\delta)
    \end{equation}
    with
    \begin{equation*}\label{eq:N}
        \sfN_\aa(\delta) := 1 +
        \aa \sfR\big(\delta^2 + \tfrac14\aa^2\big)^{1/2}\left(
        \aa \sfR\big(\delta^2 + \tfrac14\aa^2\big)^{1/2} + 2
        \right )^{-1},\qquad \delta\ge 0.
    \end{equation*}
    Moreover, the operator-valued function $\sfN_\aa(\delta)$
    satisfies the following properties.
    \begin{myenum}
        \item
        The estimate
        $\|\sfN_\aa(\delta)\| \le \frac32$ is valid for all $\delta \ge 0$.
        \item
        The convergence $\sfN_\aa(\delta)\arr\sfN_\aa(0)$ holds
        in the operator norm as $\delta\arr 0+$.
        \item
        $(0,\infty)\ni\delta\mapsto \sfN_\aa(\delta)$ is real analytic.
        \item
        The estimate\footnote{Here and in the following we define
            the derivative of an operator-valued function
            $\dR_+\ni\delta\mapsto \sfA(\delta)$ as
            the limit in the operator-norm
            of the fraction
            $\frac{\sfA(\dl') -\sfA(\dl)}
            {\dl' - \dl}$ as $\dl'\arr\dl$.}
        $\|\p_\delta\sfN_\aa(\delta)\| \le \frac{\delta}{\aa^2}$
        is valid for all $\delta \ge 0$.
    \end{myenum}
    In particular, representation~\eqref{eq-decompoBa}
    yields real analyticity of $\sfB_\aa(\dl)$
    with respect to $\delta \in (0,\infty)$.
\end{lem}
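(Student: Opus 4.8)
The plan is to pass to the Fourier representation of Lemma~\ref{lem:fourier}, in which every operator occurring in the statement becomes multiplication by an explicit scalar function of $p\in\dR^2$, and to reduce all the assertions to elementary identities and pointwise estimates; the only genuine point is that these must be uniform in $p$. Throughout write $s=s(\delta,p):=\sqrt{|p|^2+\delta^2+\tfrac14\aa^2}$ and $\kp=\sqrt{\delta^2+\tfrac14\aa^2}$, so that $s\ge\kp\ge\tfrac{\aa}{2}$ and hence $\aa+2s\ge 2\aa$; this single bound does all the work. By Lemma~\ref{lem:fourier}, $\aa\sfQ_0(\kp)$ is multiplication by $\tfrac{\aa}{2s}$, so $\sfB_\aa(\delta)=(\sfI-\aa\sfQ_0(\kp))^{-1}$ is multiplication by $b:=\tfrac{2s}{2s-\aa}$ (bounded for $\delta>0$, since then $2s-\aa\ge 2\kp-\aa>0$). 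Likewise $\tfrac{\aa^2}{2}\sfR(\delta^2)$ is multiplication by $\tfrac{\aa^2}{2(|p|^2+\delta^2)}$, and $\sfR(\delta^2+\tfrac14\aa^2)^{1/2}$ is multiplication by $1/s$, so $\sfN_\aa(\delta)$ is multiplication by $n:=1+\tfrac{\aa/s}{\aa/s+2}=1+\tfrac{\aa}{\aa+2s}$.

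First I would prove the representation~\eqref{eq-decompoBa}, which in the Fourier picture is the scalar identity $b=\tfrac{\aa^2}{2(|p|^2+\delta^2)}+n$. Since $|p|^2+\delta^2=s^2-\tfrac14\aa^2=\tfrac14(2s-\aa)(2s+\aa)$, putting the right-hand side over the common denominator $(2s-\aa)(2s+\aa)$ gives the numerator $2\aa^2+(2s-\aa)(2s+\aa)+\aa(2s-\aa)=4s^2+2\aa s=2s(2s+\aa)$, so the right-hand side equals $\tfrac{2s}{2s-\aa}=b$, as claimed.

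Next I would read off properties (i), (ii), (iv) from the formula $n=1+\tfrac{\aa}{\aa+2s}$. Property (i) is immediate, since $1\le n\le 1+\tfrac{\aa}{2\aa}=\tfrac32$ pointwise. For (ii), with $s_\delta:=s(\delta,p)$ and $s_0:=s(0,p)$ one has $0\le s_\delta-s_0=\tfrac{\delta^2}{s_\delta+s_0}\le\tfrac{\delta^2}{\aa}$, so $|n(\delta,p)-n(0,p)|=\tfrac{2\aa(s_\delta-s_0)}{(\aa+2s_\delta)(\aa+2s_0)}\le\tfrac{\delta^2}{2\aa^2}$ uniformly in $p$, whence $\|\sfN_\aa(\delta)-\sfN_\aa(0)\|\le\tfrac{\delta^2}{2\aa^2}\arr 0$ as $\delta\arr 0+$. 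For (iv), $\p_\delta n=-\tfrac{2\aa\,\p_\delta s}{(\aa+2s)^2}$ with $\p_\delta s=\delta/s$, giving $|\p_\delta n|=\tfrac{2\aa\delta}{s(\aa+2s)^2}\le\tfrac{\delta}{\aa^2}$ uniformly in $p$; a routine estimate of $\p_\delta^2 n$ (likewise uniform in $p$ for $\delta$ near any fixed point) lets one identify the operator-norm derivative of $\sfN_\aa(\delta)$ with multiplication by $\p_\delta n$ via Taylor's theorem, which yields $\|\p_\delta\sfN_\aa(\delta)\|\le\tfrac{\delta}{\aa^2}$.

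Finally, for (iii) and the concluding remark I would argue by holomorphic extension. Fix $\delta_0>0$; for complex $\delta$ in a disc about $\delta_0$ whose radius depends only on $\delta_0$ and $\aa$ (not on $p$), the quantity $|p|^2+\delta^2+\tfrac14\aa^2$ stays in the open right half-plane, so its principal square root is holomorphic there with positive real part, and $n(\delta,p)=1+\aa\big(\aa+2\sqrt{|p|^2+\delta^2+\tfrac14\aa^2}\big)^{-1}$ extends to a holomorphic function of $\delta$, bounded uniformly in $p$ on that disc. Hence, for any $\phi,\psi\in L^2(\dR^2)$, the map $\delta\mapsto(\sfN_\aa(\delta)\phi,\psi)_{L^2(\dR^2)}=\int_{\dR^2}n(\delta,p)\,\wh\phi(p)\overline{\wh\psi(p)}\,\dd p$ extends holomorphically across $\delta_0$ by dominated convergence, \ie~is real analytic in the sense fixed in the introduction; as $\delta_0>0$ was arbitrary, this proves (iii). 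Adding the resolvent term $\tfrac{\aa^2}{2}\sfR(\delta^2)$, which is real analytic on $(0,\infty)$ by standard resolvent analyticity, then gives the concluding claim. The only point requiring care throughout is precisely this uniformity in $p\in\dR^2$ of the estimates and of the holomorphic extension, and in every instance it reduces to the inequality $s\ge\tfrac{\aa}{2}$.
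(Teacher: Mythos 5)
Your proof is correct and follows essentially the same route as the paper: diagonalize in Fourier variables via Lemma~\ref{lem:fourier}, split the symbol $\frac{2s}{2s-\aa}$ into the resolvent piece $\frac{\aa^2}{2(|p|^2+\delta^2)}$ and $n=1+\frac{\aa}{\aa+2s}$, and deduce (i), (ii), (iv) from pointwise bounds using $s\ge\frac{\aa}{2}$. The only cosmetic difference is in (iii), where the paper simply cites analyticity of $\delta\mapsto\sfR(\delta^2+\tfrac14\aa^2)$ while you spell out a holomorphic extension of the symbol uniform in $p$; both are sound, and your version is if anything slightly more self-contained.
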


\begin{proof}
    By Lemma~\ref{lem:fourier}, the operator $\sfB_\aa(\delta)$ is unitarily equivalent (via the Fourier transformation) to the operator of multiplication with the function
    \[
        f_{\aa,\dl}(p)
        :=
        \bigg(1 - \frac{\aa}{2\tau_{\aa,\dl} (p)}\bigg)^{-1},
    \]
    where
    $\tau_{\aa,\dl} (p)
    :=
    \sqrt{|p|^2 +\dl^2 +\frac14\aa^2}$.
    Note that the function $f_{\aa,\delta}$ can be decomposed as
    $f_{\aa,\dl}(p) = m_{\aa,\dl}(p) + n_{\aa,\dl}(p)$ with
    \[
        m_{\aa,\dl}(p) := \frac{\aa^2}{2(|p|^2 + \delta^2)}
        \qquad\text{and}\qquad
        n_{\aa,\dl}(p) := 1 + \frac{\aa}{2\tau_{\aa,\dl} (p) + \aa}\,.
    \]
    Observe that we have
    \begin{equation}\label{eq:f2}
        n_{\aa,\dl}(p) \le  n_{\aa,\dl}(0)
        =1 + \frac{\aa}{2\tau_{\aa,\dl}(0)+\aa}
        \le
        1 + \frac{\aa}{2\tau_{\aa,0}(0)+\aa}
        =  1 + \frac{1}{2} = \frac{3}{2}.
    \end{equation}
    Clearly, the operators of multiplication
    with $m_{\aa,\dl}$ and with
    $n_{\aa,\dl}$ are unitarily equivalent
    via the inverse Fourier transformation to
    $\frac{\aa^2}{2}\sfR(\delta^2)$ and to $\sfN_\aa(\delta)$,
    respectively. Hence, the decomposition~\eqref{eq-decompoBa} is valid. In particular,
    an  upper bound determined in (i) holds, thanks to~\eqref{eq:f2}.

    The estimate
    \[
    \begin{split}
        \left \|\sfN_\aa(\delta) - \sfN_\aa(0)\right \|
        & =
        \sup_{p\in\dR^2} \left |
        \frac{\aa}{2\tau_{\aa,\dl} (p) + \aa}
        -
        \frac{\aa}{2\tau_{\aa,0} (p) + \aa}
        \right |\\
        & \le
        \frac{1}{2\aa}
        \sup_{p\in\dR^2} \left |
        \tau_{\aa,0} (p) - \tau_{\aa,\dl} (p)\right | \le
        \frac{\delta^2}{2\aa}
        \sup_{p\in\dR^2}
        \frac{1}{\tau_{\aa,0} (p) + \tau_{\aa,\dl} (p)}
        \le
        \frac{\delta^2}{2\aa^2},
    \end{split}
    \]
    implies the convergence in (ii).
    Analyticity of $(0,\infty)\ni \delta\mapsto \sfR(\frac14\aa^2 + \delta^2)$ yields the claim of~(iii).

    Define $\p_\dl \sfN_\aa(\delta)\colon L^2(\dR^2)\arr L^2(\dR^2)$
    as the operator being unitarily equivalent via the Fourier transformation to the multiplication with
    the function
    \begin{equation*}\label{key}
        \p_\dl n_{\aa,\dl}(p) =
        -\frac{2\aa\dl}{\left (2\tau_{\aa,\dl}(p)+ \aa\right )^2\tau_{\aa,\dl}(p)}.
    \end{equation*}
    Next, we show that the operator
    $\p_\dl \sfN_\aa(\dl )$ defined as above
    satisfies
    \begin{equation}\label{eq-1N}
        \lim _{\dl'\to \dl}
        \left\|
        \frac{\sfN_\aa (\dl ')- \sfN_\aa (\dl)}{\dl '-\dl } - \p_\dl \sfN_\aa (\dl )
        \right\| = 0\,.
    \end{equation}
    Applying the mean-value theorem we obtain
    \[
        \left|
        \frac{n_{\aa,\dl'} (p)-n_{\aa,\dl} (p)}
        {\dl'-\dl } -
            \p_\dl n_{\aa,\dl} (p) \right|
        \leq
        \left|
        \p_{\dl\dl}^2 n_{\aa,\dl_\star} (p)(\dl '-\dl)\right|,
    \]
    where $\delta_\star\in (0,\infty)$ lies between $\delta$ and $\delta'$.
    A straightforward calculations shows
    \[
        \p_{\dl\dl}^2 n_{\aa,\dl} (p)
        =
        -\frac{2\aa }{ (2\tau_{\aa,\dl} (p)+\aa )^2 \tau_{\aa,\dl} (p)}
        +
        \frac{8\aa \dl^2 }{(2\tau_{\aa,\dl} (p) +\aa )^3 \tau_{\aa,\dl} (p)^2}
        +
        \frac{2\aa \dl^2}{(2\tau_{\aa,\dl} (p)+\aa )^2 \tau_{\aa,\dl} (p)^3},
    \]
    which implies
    \[
        \left| \p_{\dl\dl}^2 n_{\aa,\dl} (p)\right|
        \leq \frac{3}{\aa^2},
    \]
    and hence
    \[
        \sup_{p\in \R^2}
        \left|
        \frac{n_{\aa,\dl'} (p)- n_{\aa,\dl} (p)}{\dl'-\dl } - \p_\dl n_{\aa,\dl} (p ) \right|
        \leq
        \frac{3}{\aa^2}\left| \dl '-\dl \right|.
    \]
    This completes the verification of~\eqref{eq-1N}.

    Finally, we get
    \begin{equation*}\label{key}
    \begin{split}
        \|\p_\dl \sfN_\aa(\delta)\|
        & =
        \sup_{p\in\dR^2}
        \frac{2\aa\dl}{\left (2\tau_{\aa,\dl}(p)+ \aa\right )^2\tau_{\aa,\dl}(p)}   \\
        & =
        \frac{2\aa\dl}{\left (2\tau_{\aa,\dl}(0)+ \aa\right )^2\tau_{\aa,\dl}(0)}
        \le
        \frac{2\aa\dl}{\left (2\tau_{\aa,0}(0)+ \aa\right )^2\tau_{\aa,0}(0)} =  \frac{\delta}{\aa^2},
    \end{split}
    \end{equation*}
    which settles the claim of (iv).
\end{proof}
In what follows, we identify $x\in\dR^2$ with $(x,0)\in\dR^3$. For a
given measurable $V\colon \dR^2\arr \dR_+$ we introduce the integral kernels
\begin{subequations}\label{eq:kernels}
\begin{align}
    \wDb(\delta)(x,y)         & := V(x)\Db(\delta)(x,y) V(y),
    \label{eq:kernel1}\\
    \sfD^{(1)}_V(\delta)(x,y) & := V(x)G_\kp(x-y)V(y)E(x,y),
    \label{eq:kernel2}
\end{align}
\end{subequations}
where
\begin{equation*}\label{eq:E}
    E(x,y) :=
    \frac{|\nabla f(x)|^2 + |\nabla f(y)|^2}{4}
    -
    \frac{|f(x) - f(y)|^2(\kp|x-y|+ 1)}{2|x-y|^2}.
\end{equation*}
Furthermore, we work out a representation for
the operator-valued function $\wDb(\delta)$
associated with the kernel in~\eqref{eq:kernel1}
under certain limitation on the growth of $V$.
\begin{prop}\label{prop:expansion}
    Let a measurable $V\colon\dR^2\arr \dR_+$ satisfy  $V(x) \le c\exp\left (\frac{\aa}{4}|x|\right )$ for all $x\in\dR^2$ with some constant $c > 0$.
    Let the integral kernels $\wDb(\delta)$ and $\sfD_V^{(1)}(\delta)$, $\delta\in [0,1]$, $\beta \in (0,1]$, be as in~\eqref{eq:kernels}.
    Then there exist constants $C_j = C_j(\aa,f,c) > 0$, $j = 1,2,3$,
    such that the following claims hold.
    \begin{myenum}
        \item
        For all $x,y\in\dR^2$, the pointwise bound
        \begin{equation}\label{eq:pntws_bnd}
            |\sfD^{(1)}_V(\delta)(x,y)|\le
            C_1 G_{\frac{\aa}{4}}(x-y)\left [1 + \frac12 \kp |x-y|\right ],
        \end{equation}
        holds, the kernel $\sfD^{(1)}_V(\delta)(x,y)$
        defines the self-adjoint operator $\sfD^{(1)}_V(\delta)\in\cB(L^2(\dR^2))$,
        and, in addition, $\|\sfD^{(1)}_V(\delta)\| \le C_2$.

        \item
        For all $x,y\in\dR^2$, the decomposition
        \begin{equation}\label{eq:representation_kernel}
            \wDb(\delta)(x,y)
            =
            \sfD_V^{(1)}(\delta)(x,y)\beta^2 + \wDb^{(2)}(\delta)(x,y)\beta^4
        \end{equation}
        holds, the kernel $\wDb^{(2)}(\delta)(x,y)$ defines the self-adjoint operator $\wDb^{(2)}(\delta)\in\cB(L^2(\dR^2))$,
        and, in addition,
        $\|\wDb^{(2)}(\delta )\| \le  C_3$.
    \end{myenum}
    In particular, the kernel $\wDb(\delta)(x,y)$ induces the self-adjoint operator
    \begin{equation*} \label{eq:representation}
        \wDb(\delta)
        =
        \sfD^{(1)}_V(\delta )\beta^2 + \wDb^{(2)}(\delta )\beta^4\in\cB(L^2(\dR^2))\,.
    \end{equation*}
\end{prop}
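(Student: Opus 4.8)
The plan is to Taylor-expand, for each fixed pair $x\neq y$, the integral kernel of $\Db(\dl)$ in the parameter $t:=\beta^2$ about $t=0$ with Lagrange remainder, to read off pointwise bounds from the explicit form of $G_\kp$ together with the compact support of $f$, and finally to convert these into operator-norm bounds via the Schur test.

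First I would record that, since $\sfQ_0(\kp)$ is $\Qb(\kp)$ at $\beta=0$ (\ie~has kernel $G_\kp(x-y)$), the kernel of $\Db(\dl)=\Qb(\kp)-\sfQ_0(\kp)$, with $\kp=\sqrt{\dl^2+\tfrac14\aa^2}$ held fixed, equals $h(\beta^2)-h(0)$, where, in view of~\eqref{def:SL},
\[
    h(t) := \big(1+t|\nabla f(x)|^2\big)^{1/4}\big(1+t|\nabla f(y)|^2\big)^{1/4}\, G_\kp\big(\rho(t)\big),\qquad \rho(t):=\sqrt{|x-y|^2+t\,|f(x)-f(y)|^2}\,.
\]
Because $f\in C^2(\dR^2)$ has compact support $\cS$, the map $t\mapsto h(t)$ is smooth on $[0,\infty)$ for every fixed $x\neq y$. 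Using $\tfrac{d}{d\rho}G_\kp(\rho)=-G_\kp(\rho)\tfrac{\kp\rho+1}{\rho}$ one computes $h'(0)=G_\kp(x-y)E(x,y)$, so that $V(x)V(y)h'(0)=\sfD^{(1)}_V(\dl)(x,y)$; Taylor's formula with Lagrange remainder then gives, for each $\beta\in(0,1]$, the \emph{exact} identity $h(\beta^2)-h(0)=\beta^2 G_\kp(x-y)E(x,y)+\tfrac12\beta^4 h''(\xi)$ with $\xi=\xi(x,y,\beta)\in(0,\beta^2]$. Multiplying by $V(x)V(y)$ yields~\eqref{eq:representation_kernel} with $\wDb^{(2)}(\dl)(x,y)=\tfrac12 V(x)V(y)h''(\xi)$; equivalently $\wDb^{(2)}(\dl)(x,y)=\beta^{-4}\big(\wDb(\dl)(x,y)-\beta^2\sfD^{(1)}_V(\dl)(x,y)\big)$, which, like the kernel of $\sfD^{(1)}_V(\dl)$, is manifestly symmetric under $x\leftrightarrow y$.

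Next come the pointwise estimates. For~(i): from $|\nabla f|\le L_f$ and $|f(x)-f(y)|\le L_f|x-y|$ one reads off $|E(x,y)|\le L_f^2\big(1+\tfrac12\kp|x-y|\big)$, while $E(x,y)=0$ unless $x\in\cS$ or $y\in\cS$. In the latter case at least one of $|x|,|y|$ is $\le R:=\sup_{z\in\cS}|z|$, whence $V(x)V(y)\le c^2 e^{\frac{\aa}{4}(|x|+|y|)}\le c^2 e^{\aa R/2}e^{\frac{\aa}{4}|x-y|}$; combining this with $\kp\ge\tfrac{\aa}{2}$ gives $V(x)V(y)G_\kp(x-y)\le c^2 e^{\aa R/2}G_{\aa/4}(x-y)$, and multiplying the two bounds produces~\eqref{eq:pntws_bnd} with $C_1=c^2 L_f^2 e^{\aa R/2}$. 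For~(ii) I would expand $h''$ by the product rule: the first two $t$-derivatives of $(1+t|\nabla f(\cdot)|^2)^{1/4}$ are bounded by constants depending only on $L_f$, and since $|f(x)-f(y)|^2\le L_f^2|x-y|^2\le L_f^2\rho(t)^2$ and $\rho(t)\le\sqrt{1+L_f^2}\,|x-y|$ for $t\in[0,1]$, one gets $|\rho'(t)|\le\tfrac12 L_f^2\rho(t)$ and $|\rho''(t)|\le\tfrac14 L_f^4\rho(t)$. Feeding these into the chain rule for $G_\kp\circ\rho$ together with the bounds on $G_\kp,G_\kp',G_\kp''$, the gained powers of $\rho$ cancel the negative powers produced by differentiating $G_\kp$, yielding $|h''(\xi)|\le C(f)\,G_\kp(x-y)\,Q(\kp|x-y|)$ for a fixed quadratic polynomial $Q$, with $h''(\xi)=0$ unless $x\in\cS$ or $y\in\cS$; arguing as in~(i) then gives $|\wDb^{(2)}(\dl)(x,y)|\le C\,G_{\aa/4}(x-y)\,Q(\kp|x-y|)$.

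Finally, to pass from kernels to operators: every kernel in play is symmetric and dominated by $G_{\aa/4}(x-y)$ times a polynomial in $\kp|x-y|$ of degree at most two; since $\dl\in[0,1]$ forces $\kp\le\sqrt{1+\tfrac14\aa^2}$ uniformly and $\int_{\dR^2}G_{\aa/4}(z)\,|z|^k\,\dd z<\infty$ for $k=0,1,2$ with value depending only on $\aa$, the Schur test~\cite[Lem.~0.32]{Te} yields $\|\sfD^{(1)}_V(\dl)\|\le C_2$ and $\|\wDb^{(2)}(\dl)\|\le C_3$ with $C_2,C_3$ depending only on $\aa,f,c$ and uniform in $\dl\in[0,1]$, $\beta\in(0,1]$. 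A bounded integral operator with a symmetric kernel is self-adjoint, giving the self-adjointness claims, and the ``in particular'' statement follows by summing the two pieces. I expect the main obstacle to be the uniform near-diagonal bound on $h''(\xi)$: the key point making it work is that each $t$-differentiation of $\rho(t)$ brings down a factor $|f(x)-f(y)|^2\lesssim|x-y|^2\le\rho(t)^2$ which exactly compensates the $\rho^{-1}$ produced by each $\rho$-differentiation of $G_\kp$, so that $h''$ inherits only the same integrable $|x-y|^{-1}$ singularity as $G_\kp$ itself, up to a polynomial factor that is harmless against the exponential decay of $G_{\aa/4}$.
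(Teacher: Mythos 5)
Your proof is correct and follows the same overall strategy as the paper --- a second-order expansion of the kernel in $\beta$, pointwise domination by $G_{\aa/4}$ times a low-degree polynomial in $\kp|x-y|$ using the compact support of $f$, and the Schur test --- but you organize the expansion differently. The paper keeps the Jacobian factors $g_\beta^{1/2}$ separate and Taylor-expands only $F(\rho_\beta^2)$ in the variable $\rho_\beta^2$ around $\rho_0^2$; the resulting $\beta^4$-coefficient is then a sum of several pieces ($K_2$ and $K_3$ coming from the Jacobian product and its interaction with the first Taylor term, plus $K_4$ from the Lagrange remainder of $F$), each bounded separately. You instead collect the whole kernel into a single scalar function $h(t) = (1+t|\nabla f(x)|^2)^{1/4}(1+t|\nabla f(y)|^2)^{1/4}\,G_\kp(\rho(t))$ of $t=\beta^2$ and perform one Taylor expansion with Lagrange remainder, so that $\wDb^{(2)}(\dl)(x,y) = \tfrac12 V(x)V(y)\,h''(\xi)$ comes out in a single stroke; the price is a somewhat busier $h''$, mixing derivatives of the Jacobian factors with those of $G_\kp$, but the compensation mechanism you identify --- each $t$-derivative of $\rho$ carries a factor $|f(x)-f(y)|^2\le L_f^2\rho^2$ that cancels the $\rho^{-1}$ lost with each $\rho$-derivative of $G_\kp$ --- is precisely the one the paper exploits in bounding $K_4$. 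Your computation $h'(0) = G_\kp(x-y)E(x,y)$ checks out, your use of the support of $f$ and of $\kp\ge\tfrac{\aa}{2}$ to absorb $V(x)V(y)$ into $G_{\aa/4}$ matches the paper's argument, and deducing the symmetry of $\wDb^{(2)}(\dl)$ from that of $\wDb(\dl)$ and $\sfD^{(1)}_V(\dl)$ (rather than from the non-symmetric-looking $\xi(x,y,\beta)$) is the right move. Nothing is missing.
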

\begin{proof}
    \noindent{\rm (i)}\,
    Recall that $\cS = \supp f$ and let $\cB_R\subset\dR^2$ be an open ball of the radius $R > 0$ centred at the origin such
    that the inclusion $\cS\subset\cB_R$ holds.
    The subset
    of $\dR^2\times\dR^2$, where the factor $E(x,y)$ in the expression~\eqref{eq:kernel2} for $\sfD^{(1)}_V(\delta)(x,y)$
    is not equal to zero can be covered by two (intersecting) sets
    \begin{equation}\label{eq:UV}
        \cU := \cB_R\times\cB_R\quad\text{and}\quad
        \cV := (\cS\times\cB_R^{\rm c}) \cup (\cB_R^{\rm c}\times\cS),
    \end{equation}
    where $\cB_R^{\rm c} := \dR^2\setminus \ov{\cB_R}$. Applying the bound
        $\frac14|x-y| > \frac14(|x| + |y|) - \frac12 R$ (valid for all $(x,y)\in \cU\cup \cV$)  we get
    \begin{equation}\label{eq:VGV2}
        V(x)G_\kp(x-y)V(y)
        \le
        c^2\exp\left (\frac{\aa R}{2}\right ) G_{\frac{\aa}{4}}(x-y),
        \qquad \forall (x,y)\in \cU\cup\cV,
    \end{equation}
    where we also used  monotonicity of Green's function with respect to $\kp$.

    Employing the inequality
    $|\nabla f| \le L_f$ we can pointwise estimate the factor
    $E$ by
    \begin{equation}\label{eq:E}
        |E(x,y)| \le
        L_f^2\left[ 1 + \frac12  \kp |x-y|\right ].
    \end{equation}
    Combining~\eqref{eq:VGV2} and \eqref{eq:E} we get   
    the bound~\eqref{eq:pntws_bnd} with
    \begin{equation}\label{eq:C1}
        C_1 := c^2L^2_f\exp\left (\frac{\aa R}{2}
        \right ).
    \end{equation}
    Taking into account that the integral
    kernel of $\sfD^{(1)}_V(\delta)$ is symmetric,
    we obtain from~\eqref{eq:pntws_bnd} using the Schur test that
    \begin{equation*}\label{eq:wtD1}
    \begin{split}
        \|\sfD^{(1)}_V(\delta)\| &
        \le
        C_1
        \int_{\dR^2} G_{\frac{\aa}{4}}(x) \dd x
        +
        C_1\frac{\kp}{2}
        \int_{\dR^2} G_{\frac{\aa}{4}}(x)|x| \dd x
        \\
        & =
        C_1\frac{1}{2}
        \int_0^\infty e^{-\frac{\aa}{4} r} \dd r +
        C_1\frac{\kp}{4}
        \int_0^\infty e^{-\frac{\aa}{4} r} r \dd r\\
        & =
        C_1
        \left (\frac{2}{\aa} + \frac{4\kp}{\aa^2}\right)
        \le C_1
        \left ( \frac{4}{\aa} + \frac{4}{\aa^2}\right ) =: C_2,
    \end{split}
    \end{equation*}
    in the last step of the above estimates we employed that $\kp = \sqrt{\frac14\aa^2 + \dl^2}
    \le \frac12\aa + \dl \le \frac12\aa + 1$ for all $\dl \in [0,1]$.
    Thus, the kernel $\sfD_V^{(1)}(\delta)(x,y)$ defines
    the operator $\sfD_V^{(1)}(\delta)\in\cB(L^2(\dR^2))$.
    Self-adjointness of $\sfD_V^{(1)}(\delta)$
    is a consequence of the identity
    $\sfD_V^{(1)}(\delta)(x,y) = \sfD_V^{(1)}(\delta)(y,x)$.

    \smallskip

    \noindent {\rm  (ii)}\,
    For $x,y\in\R^2$, we introduce
    $\rho_\beta(x,y) :=
        \left | r_\beta (x)- r _\beta (y)\right |^2$,
    where the mapping $r_\beta\colon \dR^2\arr\dR^3$ is as
    in~\eqref{eq:rbeta}.
    A simple computation yields
    \begin{equation*}
        \rho_\beta(x,y)
        =
        |x-y|^2 + |f(x) - f(y)|^2 \beta^2.
    \end{equation*}
    Furthermore, we define the function $F\colon\R_+\arr \R_+$
    by $F(s) := \frac{\e^{-\kp\sqrt{s}}}{4\pi \sqrt{s}}$
    and compute its first and second derivatives
    \[
    \begin{split}
        F'(s) & =   -\frac{\e^{-\kp\sqrt{s}} \left(\kp s^{1/2} + 1\right)}{8\pi s^{3/2}} = -F(s)\frac{\kp s^{1/2}+1}{2s},\\
        F''(s)&  =
        \frac{\e^{-\kp \sqrt{s}}\big[\kp^2 s + 3 \kp s^{1/2} + 3\big]}{16\pi s^{5/2}}= F(s) \frac{\kp^2 s + 3 \kp s^{1/2} + 3}{4s^2} .
    \end{split}
    \]
    Taylor expansion of $F(\cdot)$ in the vicinity of $s \in (0,\infty)$
    with the remainder in the Lagrange form reads as
    follows
    \[
        F(t) = F(s) + F'(s)(t-s) +
        F''(s + \tt\cdot(t-s))\frac{(t-s)^2}{2}\,,
        \qquad
        \tt= \tt(s,t) \in (0,1).
    \]
    For $x,y\in\dR^2$ we define an auxiliary function
    $\mu\colon\dR^2\times\dR^2\arr \dR_+$ by
    \[
        \mu(x,y) :=
        \rho_0(x,y) +
        \tt(\rho_0(x,y),\rho_\beta(x,y))
        |f(x)-f(y)|^2\beta^2.
    \]
    For the sake of brevity we denote
    \[
        H(x,y)  :=
        \frac{|f(x) - f(y)|^2(\kp|x-y| + 1)}{2|x-y|^2},
        \quad
        K(x,y)  :=  \frac{|\nabla f(x)|^2 + |\nabla f(y)|^2 }{4},
    \]
    and
    \begin{subequations}
        \begin{align}
        K_1(x,y) &
        :=
        \left(K(x,y) -  H(x,y)\right )\beta^2 = E(x,y)\beta^2,\label{eq:K1} \\
        K_2(x,y) &
        :=
        \left (g_\beta(x)g_\beta(y)\right )^{1/2} - 1 - \beta^2 K(x,y),
        \label{eq:K2}\\
        K_3(x,y) &
        :=
        H(x,y)
        \left (1 -
            \left (g_\beta(x)g_\beta(y)\right )^{1/2} \right )\beta^2,\label{eq:K3}\\
        K_4(x,y) &
        :=
        g_\beta(x)^{1/2} F''(\mu(x,y)) g_\beta(y)^{1/2}
        \frac{|f(x) - f(y)|^4}{2}\beta^4.\label{eq:K4}
    \end{align}
    \end{subequations}
    Dependence of the above kernels on $\beta$
    and $f$ is not indicated in the notations as no confusion can arise.
    Thus, the integral kernel $\wDb(\delta)(x,y)$ can be decomposed as
    \[
    \begin{split}
        \wDb(\delta)(x,y)
        & =
        V(x)\left [\Qb(\delta)(x,y) - \sfQ_0(\delta)(x,y) \right ] V(y)\\
        & =
        V(x)\left (G_\kp(x-y)\sum_{j=1}^3 K_j(x,y) +
        K_4(x,y)\right )V(y).
    \end{split}
    \]
    Hence, the  expansion~\eqref{eq:representation_kernel} holds with the integral kernel of $\wDb^{(2)}(\delta)$ given by
    \begin{equation}\label{eq:wDb}
        \wDb^{(2)}(\delta)(x,y)\! =
        \frac{V(x)V(y)}{\beta^4}
        \big [G_\kp(x-y)
        \left (K_2(x,y) + K_3(x,y)\right )
        + K_4(x,y)\big ].
    \end{equation}
    %
    With the aid of the definitions~\eqref{eq:K2},~\eqref{eq:K3}
    for the kernels
    $K_j(\cdot,\cdot)$, $j=2,3$,
    one obtains using $\beta \in (0,1]$
    and $|\nabla f| \le L_f$ that
    %
    \begin{equation}\label{eq:estimates_K23}
        |K_2(x,y)|
        \le
        C_{2,f}\beta^4
        \qquad\text{and}\qquad
        |K_3(x,y)|  \le
        C_{3,f}\left (\kp|x-y| +1\right)\beta^4,
    \end{equation}
    with some constants $C_{2,f}, C_{3,f} > 0$.
    Taking into account that $F''$ is a decreasing  positive function and using that $\beta \in (0, 1]$
    we estimate $K_4$
    in~\eqref{eq:K4} as
    \begin{equation}\label{eq:estimate_K4}
        |K_4(x,y)|
        \le
        C_{4,f}
        G_{\kp}(x-y) \left (\kp^2|x-y|^2 + 3\kp|x-y| + 3\right )\beta^4,
    \end{equation}
    with some constant $C_{4,f} > 0$.
    Finally, combining the estimates~\eqref{eq:VGV2},~\eqref{eq:estimates_K23},
    ~\eqref{eq:estimate_K4}, and the expression
    for $\wDb^{(2)}(\delta)(\cdot,\cdot)$ in~\eqref{eq:wDb} we end up with
    \[
        |\wDb^{(2)}(\delta)(x,y)|
        \le
        C_1C_3' G_{\frac{\aa}{4}}(x-y)
        \left [5+ 4\kp|x-y| +  \kp^2|x-y|^2\right ],
    \]
    where $C_3' :=\,{\rm max}\{C_{2,f},
    C_{3,f},C_{4,f}\}$
    and $C_1$ is as in~\eqref{eq:C1}.
    Applying the Schur test once again we get
    \begin{equation*}\label{eq:wtD2}
    \begin{split}
        \|\wDb^{(2)}(\delta)\| &
        \le
         C_1C_3'
        \int_{\dR^2} G_{\frac{\aa}{4}}(x)
        \left [5 + 4\kp|x| +  \kp^2|x|^2\right ] \dd x\\
        & =
        \frac12
        C_1 C_3'
        \int_0^\infty e^{-\frac{\aa}{4} r}\left [5 + 4\kp r +  \kp^2r^2\right ] \dd r
        =
        C_1 C_3'
        \left [ \frac{10}{\aa} +
        \frac{32}{\aa^2}\kp + \frac{64}{\aa^3}\kp^2\right ]\\
        &
        \le
        C_1 C_3'
        \left [ \frac{10}{\aa} +
        \frac{32}{\aa^2}\left (\frac{\aa}{2}+1\right ) + \frac{64}{\aa^3}\left  (\frac{\aa^2}{4}+1\right)\right ] =: C_3,
    \end{split}
    \end{equation*}
    where we used the bounds $\kp^2 \le \frac14\aa^2 + 1$ and $\kp \le \frac12\aa + 1$.
    Thus, we have shown $\wDb^{(2)}(\dl)\in \cB(L^2(\dR^2))$. Self-adjointness of $\wDb^{(2)}(\dl)$ follows from
    $\wDb^{(2)}(\dl)(x,y) = \wDb^{(2)}(\dl)(y,x)$.
\end{proof}
In the next proposition we show real analyticity
of $\wDb(\delta)$ with respect to $\delta$ and $\beta$. Furthermore, we estimate the norm of
$\p_\delta \wDb(\delta)$.
\begin{prop}\label{prop:analytic}
    Let the assumptions be as in Proposition~\ref{prop:expansion}.
    Then the following claims hold.
    \begin{myenum}
        \item The operator-valued function $(0,1)^2\ni(\delta,\beta)\mapsto \wDb(\dl)$ is real analytic in both arguments.
        \item $\|\p_\dl \wDb(\dl)\| = \cO_{\rm u}(1)$
        as $\beta \arr 0+$.
    \end{myenum}
\end{prop}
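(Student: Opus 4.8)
The plan is to work throughout with the explicit integral kernel of $\wDb(\delta)$. Recall from~\eqref{def:SL} and Lemma~\ref{lem:fourier} that $\Db(\delta)$ has kernel $g_\beta(x)^{1/2}G_\kp(r_\beta(x)-r_\beta(y))g_\beta(y)^{1/2}-G_\kp(x-y)$ with $\kp=\sqrt{\delta^2+\tfrac14\aa^2}$, so that
\[
  \wDb(\delta)(x,y)=V(x)V(y)\Big[g_\beta(x)^{1/2}g_\beta(y)^{1/2}\frac{\e^{-\kp|r_\beta(x)-r_\beta(y)|}}{4\pi|r_\beta(x)-r_\beta(y)|}-\frac{\e^{-\kp|x-y|}}{4\pi|x-y|}\Big],
\]
which, since $f$ and $\nabla f$ vanish outside $\cS$, is supported on the set $\cU\cup\cV$ from~\eqref{eq:UV}. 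For~(i) I would complexify the two parameters and combine holomorphy of this kernel in $(\delta,\beta)$ with the dominated-convergence estimate underlying Proposition~\ref{prop:expansion}; for~(ii) I would differentiate the kernel in $\delta$ and rerun the Schur-test bound.

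For~(i), I would fix $(\delta_0,\beta_0)\in(0,1)^2$ and let $(\delta,\beta)$ vary in a small polydisc $P\subset\dC^2$ centred at it. The key observation is that for $P$ small enough the three expressions $\delta^2+\tfrac14\aa^2$, $1+\beta^2|\nabla f(x)|^2$ and $\rho_\beta(x,y)=|x-y|^2+\beta^2|f(x)-f(y)|^2$ stay, uniformly in $x,y$, in a region of the right half-plane away from the negative real axis — here one uses $|f(x)-f(y)|\le L_f|x-y|$ — so that $\kp$, $g_\beta(x)^{1/2}$ and $|r_\beta(x)-r_\beta(y)|:=\rho_\beta(x,y)^{1/2}$ extend holomorphically to $P$, and after shrinking $P$ one still has $\Re\big(\kp\,|r_\beta(x)-r_\beta(y)|\big)\ge\tfrac{3\aa}{8}|x-y|$ for all $x,y$. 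Then the kernel $\wDb(\delta)(x,y)$ is holomorphic in $(\delta,\beta)$ on $P$ for a.e.\ $(x,y)$, and repeating the estimate behind~\eqref{eq:VGV2} on the support $\cU\cup\cV$ gives a bound $|\wDb(\delta)(x,y)|\le\Phi(x,y):=C\,G_{\frac{\aa}{8}}(x-y)\,\one_{\cU\cup\cV}(x,y)$, uniform over $P$, with $\Phi$ of Schur class. For $\phi,\psi\in L^2(\dR^2)$ the function $(\delta,\beta)\mapsto(\wDb(\delta)\phi,\psi)_{L^2(\dR^2)}$ is then an integral of $\ov{\psi(x)}\,\phi(y)\,\wDb(\delta)(x,y)$ over $\dR^2\times\dR^2$ converging absolutely and uniformly on $P$; I would conclude that it is continuous on $P$, use Fubini's theorem and Cauchy's integral theorem to see that its contour integral over any triangle in the $\delta$- or $\beta$-plane vanishes, invoke Morera's theorem to get separate holomorphy, and then Osgood's lemma for joint holomorphy; restricting to real $(\delta,\beta)$ yields real analyticity in the sense of the definition fixed in the introduction. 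This is essentially the argument already alluded to for $\Db(\delta)$, now carried through with the weights $V$.

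For~(ii), since $\p_\delta\kp=\delta/\kp$ and $\p_\delta\big(\tfrac{\e^{-\kp\sqrt s}}{4\pi\sqrt s}\big)=-\tfrac{\delta}{\kp}\,\tfrac{\e^{-\kp\sqrt s}}{4\pi}$, the candidate for the kernel of $\p_\delta\wDb(\delta)$ is
\[
  -\frac{\delta}{\kp}\,V(x)V(y)\Big[g_\beta(x)^{1/2}g_\beta(y)^{1/2}\frac{\e^{-\kp|r_\beta(x)-r_\beta(y)|}}{4\pi}-\frac{\e^{-\kp|x-y|}}{4\pi}\Big];
\]
I would justify that this defines the operator-norm derivative either from~(i) together with the uniform operator-norm bound of Proposition~\ref{prop:expansion} (\cf~\cite[\S VII.1.1]{Kato}), or, more elementarily, by copying the mean-value-theorem argument used for~\eqref{eq-1N} in Lemma~\ref{le-decompoB}. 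To estimate the norm I would split the bracket into $\big((g_\beta(x)g_\beta(y))^{1/2}-1\big)\tfrac{\e^{-\kp|r_\beta(x)-r_\beta(y)|}}{4\pi}$, whose first factor is $\cO(\beta^2)$, and $\tfrac{1}{4\pi}\big(\e^{-\kp|r_\beta(x)-r_\beta(y)|}-\e^{-\kp|x-y|}\big)$, for which the mean value theorem together with $|r_\beta(x)-r_\beta(y)|^2-|x-y|^2=\beta^2|f(x)-f(y)|^2\le\beta^2 L_f^2|x-y|^2$ produces a factor $\tfrac12\kp\beta^2 L_f^2|x-y|$; using $|r_\beta(x)-r_\beta(y)|\ge|x-y|$, $\kp\ge\tfrac12\aa$, $\delta/\kp\le 2/\aa$ for $\delta\in[0,1]$, and $V(x)V(y)\le c^2\e^{\aa R/2}\e^{\frac{\aa}{4}|x-y|}$ on $\cU\cup\cV$, this leads to
\[
  |\p_\delta\wDb(\delta)(x,y)|\le C\,\beta^2\,G_{\frac{\aa}{4}}(x-y)\big(|x-y|+|x-y|^2\big)\,\one_{\cU\cup\cV}(x,y),\qquad C=C(\aa,f,c),
\]
and the Schur test~\cite[Lem.~0.32]{Te} finally gives $\|\p_\delta\wDb(\delta)\|\le C'\beta^2$ for all $\delta\in[0,1]$, $\beta\in(0,1]$; in particular $\|\p_\delta\wDb(\delta)\|=\cO_{\rm u}(\beta^2)=\cO_{\rm u}(1)$, even a little more than asserted.

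The main obstacle — the same one met in Proposition~\ref{prop:expansion} — is that the weight $V$ may grow like $\e^{\frac{\aa}{4}|x|}$, so that $V\,\Db(\delta)\,V$ and its $\delta$-derivative cannot be controlled by conjugating the known analyticity and estimates for $\Db(\delta)$, and everything has to be done at the level of kernels. As before, this is overcome by exploiting that the relevant kernels vanish off $\cU\cup\cV$, where $\tfrac{\aa}{4}(|x|+|y|)\le\tfrac{\aa}{4}|x-y|+\tfrac{\aa}{2}R$, so the growth of $V(x)V(y)$ is absorbed by the decay $\e^{-\kp|x-y|}\le\e^{-\frac{\aa}{2}|x-y|}$ of Green's function with room to spare. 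The only genuinely new point, in~(i), will be to check that the complexified square roots avoid the branch cut uniformly in $(x,y)$; I expect this to follow routinely from $|f(x)-f(y)|\le L_f|x-y|$ once $P$ is small enough, at the negligible cost of replacing $G_{\frac{\aa}{4}}$ by $G_{\frac{\aa}{8}}$ in the dominating kernel.
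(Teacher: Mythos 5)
Your proposal is correct and the underlying ideas (working directly with the explicit kernel, exploiting that it vanishes off $\cU\cup\cV$ so the growth of $V$ is absorbed by the Gaussian decay of $G_\kp$, and closing with a Schur test against a $G_{\aa/4}$-type majorant) are the same as in the paper. The differences are ones of packaging and sharpness. For (i), the paper takes a shortcut: it invokes Kato's criterion to reduce to scalar functions $(\wDb(\delta)h_1,h_2)$ with $h_1,h_2\in C_0^\infty(\dR^2)$, and then simply inherits analyticity from the (already asserted) analyticity of $\Db(\delta)$ together with local boundedness of $V$ — the compact supports of $h_1,h_2$ make the weight $V$ harmless without any kernel estimates. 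Your Morera--Osgood argument with a uniform Schur-class dominant is more self-contained and tests against all of $L^2$, at the price of tracking the complexified square roots (which, as you note, is routine once the real part of $\kp|r_\beta(x)-r_\beta(y)|$ is kept bounded below). For (ii), the paper identifies the same kernel for $\p_\delta\wDb(\delta)$, verifies that it is the norm derivative via the same mean-value-theorem argument you mention as your fallback, but then estimates the square bracket crudely by the sum $|e^{-\kp|x-y|}|+\|g_\beta\|_\infty e^{-\kp|r_\beta(x)-r_\beta(y)|}$, yielding only $\|\p_\delta\wDb(\delta)\|=\cO_{\rm u}(1)$. Your finer splitting of the bracket into a $\big((g_\beta(x)g_\beta(y))^{1/2}-1\big)$-part and a difference of exponentials, each of size $\cO(\beta^2)$, gives the stronger bound $\|\p_\delta\wDb(\delta)\|=\cO_{\rm u}(\beta^2)$; this is a genuine (if unneeded) improvement over what the paper proves and records.
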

\begin{proof}
    \noindent (i) Combining~\cite[Thm. III 3.12]{Kato} (and the discussion in~\cite{Kato} following it) with the claims of Proposition~\ref{prop:expansion} we conclude that it suffices to check real analyticity with respect to $\delta,\beta \in (0,1)$ of
    the scalar-valued functions
    \begin{equation*}\label{key}
            (0,1)^2\ni(\delta,\beta)\mapsto \left (\wDb(\dl)h_1,h_2\right )_{L^2(\dR^2)},
    \end{equation*}
    where $h_1,h_2\in C^\infty_0(\dR^2)$.
    The latter follows from real analyticity of $(0,1)^2\ni(\delta,\beta)\mapsto\Db(\dl)$ in $\delta$ and $\beta$, because the function $V$ is locally bounded.

    \noindent (ii)
    Differentiating the integral kernel $\wDb(\delta)(x,y)$ with respect
    to $\dl$ we find
    \begin{equation*}\label{key}
    \begin{split}
    \p_\dl\wDb(\delta)(x,y)
    & =
    V(x)\p_\dl(\Db(\delta)(x,y)) V(y)\\
    & =
    \frac{\delta}{\kp}
    V(x)
    \left [g_\beta(x)^{1/2}
    \p_\kp \big(G_\kp(r_\beta(x) - r_\beta(y))\big)
    g_\beta(y)^{1/2} - \p_\kp \big(G_\kp(x - y)\big)\right ] V(y)\\
    & =
    \frac{\delta}{4\pi\kp}
    V(x)
    \left [e^{-\kp|x - y|} - g_\beta(x)^{1/2}
    e^{-\kp|r_\beta(x) - r_\beta(y)|}
    g_\beta(y)^{1/2} \right ] V(y).
    \end{split}
    \end{equation*}
    Next, we show that the integral operator
    $\p_\dl \wDb(\dl)\colon L^2(\dR^2)\arr L^2(\dR^2)$ associated with the above
    kernel satisfies
    \begin{equation}\label{eq-2D}
        \lim _{\dl'\to \dl }
        \left\|
        \frac{\sfD_{\beta, V} (\dl')- \sfD_{\beta, V} (\dl )}{\dl '-\dl } - \p_ \dl \sfD_{\beta, V} (\dl ) \right\| = 0.
    \end{equation}
    Applying the mean-value theorem for the
    integral kernels, we get
    \[
        \left|
        \frac{\sfD_{\beta , V}(\dl ')(x,y)-
            \sfD_{\beta , V}(\dl )(x,y)
        }{\dl'-\dl }-
            \p_\dl\sfD_{\beta , V}(\dl)(x,y)
        \right|
        \leq
        |\p_{\dl\dl}^2\sfD_{\beta,V}(\dl_\star )(x,y) (\dl'-\dl )|\,,
    \]
    where $\delta_\star$ lies between $\delta '$ and $\delta$.
    Standard calculations yield
    \[
    \begin{split}
        \p^2_{\dl\dl}\sfD_{\beta,V}(\delta )(x,y)
        & =
        \frac{1}{4\pi} V(x)   \left(
        \big(g_\beta(x)g_\beta(y)\big)^{1/2}
        \left(\frac{\dl ^2}{\kp^3}-\frac{1}{\kp }+\frac{\dl^2|r_\beta (x)-r_\beta (y)|}{\kp^2 }\right) e ^{-\kp |r_\beta (x)-r_\beta (y)|}
        \right.
        \\
        &\qquad\qquad\qquad\qquad\left. - \left( \frac{\dl^2}{\kp^3} -\frac{1}{\kp}+\frac{\dl^2|x-y| }{\kp^2} \right) e ^{-\kp |x-y|}
        \right)V(y).
    \end{split}
    \]
    Using the inequality $|r_\beta (x)-r_\beta (y)|\leq 2|x-y|$ which holds  for $L_f \beta \leq \sqrt{3}$ together
    with the estimates $\|g_\beta\|_\infty \le 1+ L_f$ and $\kp >\aa /2$ we get  
    \begin{equation*}
        |\p^2_{\dl\dl}\sfD_{\beta, V}(\dl )(x,y) |
        \leq
        \frac{1}{4\pi}
        \big(1+L_f\big)
        V(x)
        \left(
            \frac{4}{\aa }
            +
            \frac{16\dl^2}{\aa^3}
            +
            \frac{12\dl^2}{\aa^2}|x-y|
        \right)
        e^{-\frac{\aa}{2} |x-y|}V(y).
    \end{equation*}
    Making use of the fact that $\sfD_{\beta , V}(\dl)(x,y) = 0$ for $(x,y)\notin\cU\cup\cV$
    with $\cU,\cV$ as in~\eqref{eq:UV} and performing the analysis as in the {\it Step 1} of the proof for Proposition~\ref{prop:expansion} we get
    \begin{equation*}
            |\p^2_{\dl\dl}\sfD_{\beta , V}(\dl)(x,y) |
            \leq
            \frac{C}{4\pi}
            \big(1+|x-y|\big)
            e^{-\frac{\aa }{4} |x-y|},
    \end{equation*}
    with some constant $C = C(\aa,f) > 0$.
    By the Schur test we obtain
    \begin{equation*}
    \begin{split}
         \left\|\frac{\wDb(\dl ')- \wDb(\dl )}{\dl '-\dl } - \p_ \dl \wDb (\delta ) \right\|
        & \leq
        |\dl - \dl'|
        \frac{C}{4\pi}
        \int_{\R^2}(1+|x|)e^{-\frac{\aa }{4} |x|}\dd x \\
        & =
        |\dl - \dl'|
        C
        \left(\frac{2}{\aa} + \frac{8}{\aa^2}\right ).
    \end{split}
    \end{equation*}
    Therefore, the convergence~\eqref{eq-2D} is verified.

    Furthermore, the subset of $\dR^2\times\dR^2$ determined  by $\{(x,y) \,:\, x,y\in \dR^2 \,\wedge \, \p_\dl\wDb(\delta)(x,y) \ne 0
    \}$
    can be covered
    by two (intersecting) sets $\cU$ and $\cV$ defined as in~\eqref{eq:UV}.
    %
    Using the inequalities
    $|r_\beta(x) - r_\beta(y)| \ge |x-y|$ and
    $\,\frac14|x-y| \ge \frac{|x| + |y|}{4} - \frac12R$ for $(x,y)\in\cU\cup\cV$
    we get for all $\beta \in (0,1]$ the estimate
    %
    %
    \begin{equation*}
        |\p_\dl\wDb(\delta)(x,y)| \le
        \frac{e^{\aa R} \dl}{2\pi\aa}\left[2+ L_f\right ] e^{-\frac{\aa}{4}(|x|+|y|)}.
    \end{equation*}
    Hence, by the Schur test we find
    \begin{equation*}\label{key}
    \begin{split}
        \|\p_\dl\wDb(\dl)\|&
        \le
        \frac{e^{\aa R} \dl}{2\pi\aa}
        \left[2+ L_f\right ]
        \sup_{x\in\dR^2} \int_{\dR^2} e^{-\frac{\aa}{4}(|x|+|y|)}\dd y\\
        & = 
        \frac{e^{\aa R} \dl}{\aa}
        \left[2+ L_f\right ]
        \int_0^\infty e^{-\frac{\aa}{4} r}r\dd r
         =
        \frac{16e^{\aa R} \dl}{\aa^3}
        \left[2+ L_f\right ],
    \end{split}
    \end{equation*}
    and the claim of (ii) follows.
\end{proof}
In what follows we employ for $V \equiv 1$ the shorthand notation $\sfD^{(1)}(\delta) :=    \sfD^{(1)}_1(\delta)$.

\begin{cor}\label{cor:cD}
    The integral kernel $\sfD^{(1)}(\delta)(\cdot,\cdot)$
    in~\eqref{eq:kernel2} with $\delta\in[0,1]$
    and $V\equiv 1$ satisfies
    \begin{equation}\label{eq:integrability}
    \cD_{\aa,f}(\delta) :=
    2\aa^3
    \int_{\dR^2}\int_{\dR^2}\sfD^{(1)}(\delta)(x,y)\dd x \dd y < \infty.
    \end{equation}
    In addition, the function $[0,1]\ni\delta\mapsto\cD_{\aa,f}(\delta)$
    is continuous.
\end{cor}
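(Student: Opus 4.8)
The plan is to produce an explicit, $\delta$-independent $L^1(\dR^2\times\dR^2)$ majorant for the kernel $\sfD^{(1)}(\delta)(\cdot,\cdot)$; the finiteness asserted in~\eqref{eq:integrability} then follows by Fubini's theorem, and continuity in $\delta$ follows from the dominated convergence theorem applied with the same majorant.

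First I would record that, because $f$ and $\nabla f$ vanish outside $\cS = \supp f$, the factor $E(x,y)$ in~\eqref{eq:kernel2} — and hence the kernel $\sfD^{(1)}(\delta)(x,y)$ — vanishes for $(x,y)\notin\cU\cup\cV$, where $\cU,\cV$ are as in~\eqref{eq:UV} with $R$ fixed so that $\cS\subset\cB_R$; this is the same observation used at the start of the proof of Proposition~\ref{prop:expansion}. Specialising the pointwise bound~\eqref{eq:pntws_bnd} to $V\equiv 1$ (so that $c=1$) and using $\kp = \sqrt{\tfrac{1}{4}\aa^2+\delta^2}\le\tfrac{\aa}{2}+1$ for $\delta\in[0,1]$ yields
\[
  |\sfD^{(1)}(\delta)(x,y)|
  \le
  C_1\,G_{\frac{\aa}{4}}(x-y)\Big[1+\tfrac{1}{2}\big(\tfrac{\aa}{2}+1\big)|x-y|\Big]\one_{\cU\cup\cV}(x,y)
  =: \Phi(x,y),
\]
uniformly in $\delta\in[0,1]$. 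That $\Phi\in L^1(\dR^2\times\dR^2)$ is checked exactly as in step~(i) of the proof of Proposition~\ref{prop:expansion}: over the bounded set $\cU$ one uses local integrability of $G_{\aa/4}$ on $\dR^2$ together with boundedness of $z\mapsto G_{\aa/4}(z)|z|$, while over $\cV = (\cS\times\cB_R^{\rm c})\cup(\cB_R^{\rm c}\times\cS)$ one integrates first in the unbounded variable and uses translation invariance to bound that integral by the finite constant $C_1\big(\tfrac{4}{\aa}+\tfrac{4}{\aa^2}\big)$, and then integrates over the compact set $\cS$. Consequently $\sfD^{(1)}(\delta)\in L^1(\dR^2\times\dR^2)$ with a $\delta$-uniform norm bound, and the iterated integral in~\eqref{eq:integrability} is absolutely convergent.

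For the continuity claim I would note that for each fixed $(x,y)$ with $x\ne y$ the map $[0,1]\ni\delta\mapsto\sfD^{(1)}(\delta)(x,y) = G_{\kp}(x-y)E(x,y)$ is continuous, since $\delta\mapsto\kp = \sqrt{\tfrac{1}{4}\aa^2+\delta^2}$ is continuous and both $G_\kp(x-y)$ and the factor $E(x,y)$ depend continuously (indeed smoothly) on $\kp$, the remaining quantities being fixed. Combining this pointwise continuity with the $\delta$-independent majorant $\Phi\in L^1$ from the previous step, the dominated convergence theorem gives continuity of $\delta\mapsto\int_{\dR^2}\int_{\dR^2}\sfD^{(1)}(\delta)(x,y)\dd x\dd y$ on $[0,1]$, and multiplying by $2\aa^3$ preserves it. The only point that requires care — and the closest thing to an obstacle — is making sure the majorant genuinely does not depend on $\delta$: this is where one uses that the support $\cU\cup\cV$ is fixed by $f$ alone and that $\kp$ ranges over the compact interval $[\tfrac{\aa}{2},\sqrt{\tfrac{1}{4}\aa^2+1}]$ as $\delta$ runs through $[0,1]$, so that the $\kp$-dependent coefficient in~\eqref{eq:pntws_bnd} stays bounded. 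No estimates beyond those already established in the proof of Proposition~\ref{prop:expansion} are needed.
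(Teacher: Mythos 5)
Your proposal is correct and follows essentially the same route as the paper: one extracts a $\delta$-uniform pointwise majorant from the bound~\eqref{eq:pntws_bnd} (with $V\equiv 1$, $c=1$), restricts to the set where $E$ is nonzero, checks integrability of the majorant, and invokes dominated convergence for continuity in $\delta$. The only cosmetic difference is that the paper localises the support to $\cT=(\cS\times\dR^2)\cup(\dR^2\times\cS)$ while you use the slightly larger cover $\cU\cup\cV$ from~\eqref{eq:UV}; both contain the actual support of the kernel and both yield an $L^1$ majorant, so the argument is unaffected.
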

\begin{proof}
	Note that there exists an integrable majorant for the integrand in~\eqref{eq:integrability}
    with $\dl \in [0,1]$ given by
    \[
        \dR^2\times\dR^2\ni (x,y) \mapsto
        2\aa^3 C_1 G_{\frac{\aa}{4}}(x-y)
        \left [1+ \left (\frac{\aa}{4} + \frac12\right ) |x-y|\right ]\chi_{\cT}(x,y),
    \]
    where $C_1$ is as in~\eqref{eq:pntws_bnd}, $\cT = (\cS\times\dR^2)\cup(\dR^2\times\cS)$
    and $\chi_\cT\colon \dR^2\times\dR^2\arr \{0,1\}$ is the characteristic function of $\cT$.
    Hence, finiteness of $\cD_{\aa,f}(\delta)$
    directly follows from the asymptotic behaviour of $ G_{\frac{\aa}{4}}(\cdot)$.
    Furthermore, taking into account the pointwise continuity of the integrand
    in~\eqref{eq:integrability}  with respect to $\delta$, continuity of $[0,1]\ni\delta\mapsto\cD_{\aa,f}(\delta)$
    is a consequence of the dominated convergence theorem.
\end{proof}
Finally, we obtain
an alternative formula for $\cD_{\aa,f}(0)$
in terms of the Fourier transform of $f$. In the proof
of this proposition we use an integral representation of the
relativistic Schr\"odinger operator; see~\cite{IT93} and also~\cite[\S 7.12]{LL01}.
\begin{prop}\label{prop:fractional}
    The value $\cD_{\aa,f} := \cD_{\aa,f}(0)$  in~\eqref{eq:integrability}  with $\delta = 0$
    can be represented as
    \[
        \cD_{\aa,f} =
        \int_{\dR^2} |p|^2\left (\aa^2 - \frac{2\aa^3}{\sqrt{4|p|^2 + \aa^2} + \aa}\right) |\wh f(p)|^2\dd p >  0.
    \]
\end{prop}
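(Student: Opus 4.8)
The plan is to compute the double integral defining $\cD_{\aa,f}=\cD_{\aa,f}(0)$ in closed form by passing to the Fourier side. At $\dl=0$ one has $\kp=\tfrac\aa2$, so the kernel in~\eqref{eq:kernel2} with $V\equiv1$ reads $\sfD^{(1)}(0)(x,y)=G_{\aa/2}(x-y)E(x,y)$, where $E$ splits into the ``gradient part'' $\tfrac14\bigl(|\nabla f(x)|^2+|\nabla f(y)|^2\bigr)$ and the ``finite-difference part'' $-\tfrac{|f(x)-f(y)|^2(\frac\aa2|x-y|+1)}{2|x-y|^2}$. First I would note that, by the integrable majorant from Corollary~\ref{cor:cD} and the fact that $E(x,y)=0$ unless $x\in\cS$ or $y\in\cS$, each of these two pieces, multiplied by $G_{\aa/2}(x-y)$, is (up to sign) nonnegative and separately integrable over $\dR^2\times\dR^2$; hence $\cD_{\aa,f}=2\aa^3(I_1-I_2)$ with $I_1$, $I_2$ the respective double integrals. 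The gradient part is immediate: by Fubini's theorem, the symmetry of $G_{\aa/2}$, and $\int_{\dR^2}G_{\aa/2}(z)\,\dd z=\tfrac1\aa$ one gets $I_1=\tfrac1{2\aa}\|\nabla f\|_{L^2(\dR^2)}^2=\tfrac1{2\aa}\int_{\dR^2}|p|^2|\wh f(p)|^2\,\dd p$ by Plancherel's theorem.

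For $I_2$ I would substitute $z=x-y$ and recognise the remaining integral in the free variable as $\|f-f(\cdot-z)\|_{L^2(\dR^2)}^2$; since the Fourier transform of $f(\cdot-z)$ is $p\mapsto\e^{-\ii p\cdot z}\wh f(p)$, Plancherel's theorem gives $\|f-f(\cdot-z)\|_{L^2}^2=2\int_{\dR^2}(1-\cos(p\cdot z))|\wh f(p)|^2\,\dd p$, and as the integrand is nonnegative, Tonelli's theorem lets me exchange the two integrations. This reduces matters to a single radial integral:
\[
    \Phi(p):=\int_{\dR^2}\bigl(1-\cos(p\cdot z)\bigr)\,\frac{\e^{-\frac\aa2|z|}\bigl(\tfrac\aa2|z|+1\bigr)}{4\pi|z|^3}\,\dd z
    \qquad\Longrightarrow\qquad
    I_2=\int_{\dR^2}|\wh f(p)|^2\,\Phi(p)\,\dd p.
\]

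The crux --- and the one genuinely non-routine point --- is the identity $\Phi(p)=\tfrac12\bigl(\sqrt{|p|^2+\tfrac14\aa^2}-\tfrac\aa2\bigr)=\tfrac{|p|^2}{\sqrt{4|p|^2+\aa^2}+\aa}$, where the second equality is mere rationalisation. This is precisely the L\'evy--Khintchine integral representation of the two-dimensional relativistic Schr\"odinger operator $\sqrt{-\Delta_{\dR^2}+a^2}-a$ with $a=\tfrac\aa2$, whose Fourier symbol satisfies $\sqrt{|p|^2+a^2}-a=\tfrac1{2\pi}\int_{\dR^2}(1-\cos(p\cdot z))\,\tfrac{\e^{-a|z|}(1+a|z|)}{|z|^3}\,\dd z$; I would either quote this from~\cite{IT93} (see also~\cite[\S7.11]{LL01}) or establish it on the spot from the subordination formula $\sqrt\lambda=\tfrac1{2\sqrt\pi}\int_0^\infty(1-\e^{-\lambda s})s^{-3/2}\,\dd s$ applied with $\lambda=|p|^2+a^2$ and $\lambda=a^2$ and subtracted, the Gaussian representation $1-\e^{-|p|^2 s}=\int_{\dR^2}(4\pi s)^{-1}\e^{-|z|^2/(4s)}(1-\cos(p\cdot z))\,\dd z$, Fubini's theorem, and the elementary evaluation $\int_0^\infty s^{-5/2}\e^{-a^2 s-|z|^2/(4s)}\,\dd s=2(2a/|z|)^{3/2}K_{3/2}(a|z|)$ together with $K_{3/2}(t)=\sqrt{\pi/(2t)}\,\e^{-t}(1+1/t)$. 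Substituting the formulas for $I_1$ and $I_2$ into $\cD_{\aa,f}=2\aa^3(I_1-I_2)$ then yields exactly $\cD_{\aa,f}=\int_{\dR^2}|p|^2\bigl(\aa^2-\tfrac{2\aa^3}{\sqrt{4|p|^2+\aa^2}+\aa}\bigr)|\wh f(p)|^2\,\dd p$. Finally, for $p\ne0$ we have $\sqrt{4|p|^2+\aa^2}>\aa$, so the bracket is nonnegative and strictly positive away from the origin; since $f\not\equiv0$ forces $\wh f\not\equiv0$ by Plancherel's theorem, the integrand is positive on a set of positive Lebesgue measure and $\cD_{\aa,f}>0$ follows. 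I expect the only places requiring care to be the justification of the splitting $\cD_{\aa,f}=2\aa^3(I_1-I_2)$ --- handled by the bounds already proved in Proposition~\ref{prop:expansion} and Corollary~\ref{cor:cD} --- and the explicit evaluation of $\Phi$.
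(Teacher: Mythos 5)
Your proposal is correct and follows essentially the same route as the paper: split the kernel into the gradient part and the finite-difference part, evaluate the first by Plancherel, and identify the second via the L\'evy--Khintchine representation of $\sqrt{-\Delta_{\dR^2}+a^2}-a$ with $a=\aa/2$, which is precisely what the paper quotes from~\cite{IT93} (Eqs.~(2.2), (2.4) for $d=2$) together with the explicit form of $K_{3/2}$. The only substantive difference is that you optionally derive the L\'evy--Khintchine identity from scratch via the subordination formula for $\sqrt\lambda$, the Gaussian heat-kernel representation of $1-\e^{-|p|^2 s}$, and the Bessel integral $\int_0^\infty s^{-5/2}\e^{-a^2s-|z|^2/(4s)}\,\dd s$, whereas the paper simply cites~\cite{IT93}; your route is self-contained, the paper's is shorter. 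All constants check out ($\Phi(p)=\tfrac12\bigl(\sqrt{|p|^2+\tfrac14\aa^2}-\tfrac\aa2\bigr)$, hence $2\aa^3 I_2=\cD^{(2)}_{\aa,f}$), and your positivity argument (bracket positive for $p\neq0$, $\wh f\not\equiv0$) matches the paper's.
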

\begin{proof}
    First, we decompose $\cD_{\aa,f}$
    as $\cD_{\aa,f} = \cD_{\aa,f}^{(1)} - \cD_{\aa,f}^{(2)}$ with
    \begin{equation*}\label{key}
    \begin{split}
        \cD_{\aa,f}^{(1)} & := \frac{\aa^3}{2} \int_{\dR^2}\int_{\dR^2}
        \frac{e^{-\frac{\aa}{2}|x-y|}}{4\pi|x-y|}
        \left (|\nabla f(x)|^2 + |\nabla f(y)|^2  \right )  \dd x \dd y,\\[0.4ex]
        \cD_{\aa,f}^{(2)} & := \frac{\aa^3}{2}\int_{\dR^2}\int_{\dR^2}
        \frac{e^{-\frac{\aa}{2}|x-y|}}{4\pi|x-y|}
        \frac{|f(x) - f(y)|^2}{|x-y|^2} \left (\aa |x-y|+2\right ) \dd x \dd y.
    \end{split}
    \end{equation*}
    Then, we find by elementary computations
    \begin{equation}\label{eq:D1}
    \begin{split}
        \cD_{\aa,f}^{(1)} & =
        \aa^3\left ( \int_{\dR^2} |\nabla f(x)|^2\dd x\right )
        \left (
        \int_{\dR^2}
        \frac{e^{-\frac{\aa}{2}|y|}}{4\pi|y|} \dd y\right )\\
        & =
        \frac{\aa^3}{2}\left ( \int_{\dR^2} |p|^2 |\wh f(p)|^2\dd p\right )
        \left (
        \int_{\dR_+}
        e^{-\frac{\aa}{2} r} \dd r\right )
         =
        \aa^2\int_{\dR^2} |p|^2 |\wh f(p)|^2\dd p.
    \end{split}
    \end{equation}
    Next, using the identities~\cite[Eq. (2.2) and (2.4) for $d =2$]{IT93}
    we get
    \begin{equation*}\label{key}
    \begin{split}
        \int_{\dR^2}
        \left (\sqrt{|p|^2 + \tfrac14\aa^2} - \tfrac12\aa\right )|\wh f(p)|^2 \dd p
        & =
        -\int_{\dR^2}\int_{\dR^2}
        \left (f(x)f(y) - f(x)^2\right )n(x-y)\dd x\dd y\\
        & = \frac12\int_{\dR^2}\int_{\dR^2}
        \left| f(x) - f(y)\right|^2 n(x-y)\dd x\dd y,
    \end{split}
    \end{equation*}
    for $n(\cdot)$ given by
    \begin{equation*}\label{key}
    \begin{split}
        n(x) & =
        2(2\pi)^{-3/2} \left (\frac{\aa}{2}\right )^{3/2}
        |x|^{-3/2}K_{3/2}\left (\frac{\aa}{2}|x|\right )\\
        & =
        2(2\pi)^{-3/2} \left (\frac{\aa}{2}\right )^{3/2}
        |x|^{-3/2} \frac{\left (\frac{\pi}{2}\right )^{1/2}
            \exp\left (-\frac{\aa}{2}|x|\right )
            \left (\frac{2}{\aa|x|}+1\right )}
        {\left (\frac{\aa}{2}|x|\right )^{1/2}}\\
        & =
        \frac{\exp\left (-\frac{\aa}{2}|x|\right )}{4\pi|x|}\frac{1}{|x|^2}\left (2+\aa|x|\right ),
    \end{split}
    \end{equation*}
    where in between we used  the representation
    \begin{equation*}\label{key}
        K_{3/2}(x) =
        \frac{\left (\frac{\pi}{2}\right )^{1/2}
            \exp\left (-x\right )
            \left (\frac{1}{x}+1\right )}{x^{1/2}}
    \end{equation*}
    for the modified Bessel function $K_{3/2}(\cdot)$ of order $\nu = \frac32$.
    Hence, we get
    \begin{equation}\label{eq:D2}
        \cD^{(2)}_{\aa,f}
        =
        \aa^3
        \int_{\dR^2}
        \left (\sqrt{|p|^2 + \tfrac14\aa^2} - \tfrac12\aa\right )|\wh f(p)|^2\dd p
        =
        2\aa^3
        \int_{\dR^2}
        \frac{|p|^2}{\sqrt{4|p|^2 + \aa^2} + \aa} |\wh f(p)|^2\dd p.
    \end{equation}
    Finally, combining~\eqref{eq:D1} and~\eqref{eq:D2} we obtain
    \begin{equation*}\label{key}
        \cD_{\aa,f} = \cD^{(1)}_{\aa,f} - \cD^{(2)}_{\aa,f}
        = \int_{\dR^2}|p|^2\left (\aa^2 -
        \frac{2\aa^3}{\sqrt{4|p|^2 + \aa^2} + \aa}\right )|\wh f(p)|^2
        \dd p.
    \end{equation*}
    In particular, $\cD_{\aa,f} > 0$ follows
        from positivity almost everywhere in $\dR^2$ of the expression in the round
        brackets standing in the integrand in the above
        formula.
\end{proof}

\section{Proof of Theorem~\ref{thm:main}}
\label{sec:proof}

We split the proof of the main result into three steps.

\noindent {\it\underline{Step 1: Spectral equation.}}
In order to derive the spectral equation, we introduce an auxiliary
function\footnote{Introducing $V$ is a purely technical step, needed
for a regularization. The final result does not depend on the
particular choice of $V$.} $V(x) := \e^{\frac{\aa}{4} |x|}$, $\aa >
0$. Note that $V$ satisfies the growth condition specified in
Proposition~\ref{prop:expansion} with $c = 1$ and moreover $V^{-1}
\in L^2(\dR^2)\cap L^\infty(\dR^2)$.  Furthermore, we associate with the kernel
in~\eqref{eq:kernel1} the operator $\wDb(\delta)\in\cB(L^2(\dR^2))$
as in Proposition~\ref{prop:expansion}. Recall that the operator
$\wDb(\delta)$ admits the representation
\begin{equation}\label{eq:expansion_recall}
    \wDb(\delta) =
    \sfD^{(1)}_V(\delta )\beta^2 + \wDb^{(2)}(\delta )\beta^4\,,
\end{equation}
where $\sfD^{(1)}_V(\delta), \wDb^{(2)}(\delta) \in \cB(L^2(\dR^2))$
and we also have $\|\wDb(\delta)\| = \cO_{\rm u}(\beta^2)$ as $\beta
\arr 0+$. Next, we define the product
\begin{equation*}
    \wB (\delta ):= V^{-1} \sfB_\aa (\delta ) V^{-1},\qquad
    \delta > 0,
\end{equation*}
where $\sfB_\aa(\dl)$ is as in~\eqref{eq:B}.
The spectral condition~\eqref{eq:reformulation} can be rewritten as
\begin{equation*}\label{eq-spectral2}
    \forall\, \kp > \frac{\aa}{2},\qquad
    \dim\ker\left (\Op + \kp^2\right )
    =
    \dim\ker\left (\sfI -  \aa\wB(\dl)\wDb(\dl)\right ).
\end{equation*}
To compute the dimension of $\ker(\sfI -\aa\wB(\dl)\wDb(\delta))$ we investigate the asymptotic behaviour of $\wB(\dl)$ as $\dl\arr 0+$.
First, we observe that the decomposition in Lemma~\ref{le-decompoB} yields
\begin{equation*}
    \wB(\delta)
    =
    \frac{\aa^2}{2} V^{-1} \sfR(\delta^2) V^{-1}+\sfN_{\aa,V}(\delta ) \,,
\end{equation*}
where $\sfN_{\aa,V}(\delta ) := V^{-1}\sfN_\aa(\delta )V^{-1}$. Lemma~\ref{le-decompoB}\,(iii) implies that
$\dR_+\ni\delta \mapsto\sfN_{\aa,V}(\delta )$ is real analytic.
Observe that $\sfR(\delta^2)$ is an integral operator with the kernel $\frac{1}{2\pi }K_0 (\delta |x-y|)$,
where $K_0(\cdot)$ is the modified Bessel function
of the second kind and order zero; \cf~\cite[\S 9.6]{AS64}.
The function $K_0$ admits an asymptotic expansion
(see \cite[Eq. 9.6.13]{AS64})
\begin{equation}\label{eq:K0asymp}
    K_0 (z)=
    -\ln \frac{z}{2} - \gamma +\cO(z^2\ln z), \qquad z\arr 0+\,,
\end{equation}
where $\gamma \approx 0.577\dots$ is the Euler-Mascheroni constant.
In accordance to the asymptotics~\eqref{eq:K0asymp}, the
operator-valued function $\delta \mapsto \frac{1}{2} V^{-1}
\sfR(\delta^2) V^{-1}$ can be decomposed as follows
\begin{equation*}\label{eq-decomG2}
    \frac{1}{2} V^{-1} \sfR(\delta^2) V^{-1}
    =
     \sfL(\delta ) +  \sfM (\delta ) \,,
\end{equation*}
where
\begin{equation*}\label{eq:sfL}
    \sfL(\delta ):=
    -\frac{\ln \dl}{4\pi }
    \left (\cdot,V^{-1} \right )_{L^2(\dR^2)} V^{-1}
\end{equation*}
and $\sfM (\delta )\colon L^2(\dR^2)\arr L^2(\dR^2)$, $\delta  > 0$, is a bounded integral operator with the kernel
\begin{equation*}\label{eq:sfM}
    \sfM (\delta) (x,y)
    :=
    \frac{1}{4\pi}V^{-1}(x)\left[ K_0 (\delta |x-y|) + \ln \delta \right] V^{-1}(y)\,.
\end{equation*}
Define also the bounded integral operator $\sfM(0)\colon L^2(\dR^2)\arr L^2(\dR^2)$ with the kernel
\begin{equation*}\label{key}
    \sfM(0)(x,y)
        := -\frac{1}{4\pi}V^{-1}(x)\left[\gamma +\ln \frac{|x-y|}{2} \right] V^{-1}(y)\,.
\end{equation*}
Mimicking the arguments from~\cite[Prop. 3.2]{S76}
we conclude that the operator-valued function $(0,\infty)\ni \delta\mapsto \sfM (\delta)$ is real analytic and that
\begin{equation*}\label{eq:M_cont}
    \left \|\sfM (\delta) - \sfM(0)\right \|\arr 0,\qquad \delta\arr 0+.
\end{equation*}
We define the integral operator $\sfM'(\dl)\colon L^2(\dR^2)\arr L^2(\dR^2)$ via the kernel
\begin{equation*}\label{key}
    \sfM'(\dl)(x,y)
    := \frac{1}{4\pi\dl} V^{-1}(x)\big (
    1 -\dl  K_1(\dl|x-y|) |x-y|\big ) V^{-1}(y),
\end{equation*}
where $K_1(\cdot)$ is the modified Bessel function
of the second kind and order $\nu = 1$; \cf~\cite[\S 9.6]{AS64}. Analogously, for the $\sfM(\dl)$ one checks the following convergence
\begin{equation*}\label{eq-M}
    \lim _{\dl'\to \dl }\left\|\frac{\sfM (\dl ')- \sfM (\dl )}{\dl '-\dl } -  \sfM'(\dl ) \right\| = 0.
\end{equation*}
Consequently, $\sfM'  (\dl )$ can be identified with $\partial_{\delta} \sfM (\dl )$.
Furthermore, using the inequality $1 - x K_1(x) < x$
we get by the Schur test
\begin{equation}\label{eq:Mprime}
\begin{split}
    \|\partial_{\delta} \sfM (\dl )\|
    &
    \le
    \frac{1}{4\pi\dl}
    \sup_{x\in\dR^2}
    \int_{\dR^2} e^{-\frac{\aa}{4}|x|}e^{-\frac{\aa}{4}|y|}
    \big| 1 -\dl|x-y| K_1(\dl|x-y|)\big |\dd y \\
    & \le
    \frac{1}{4\pi}
    \sup_{x\in\dR^2}\left (
    e^{-\frac{\aa}{4}|x|}
    \int_{\dR^2} e^{-\frac{\aa}{4}|y|}(|x| +|y|) \dd y\right )\\
    &
    =
    \frac{1}{2}
    \left [
    \left (\sup_{x\in\dR^2} |x|e^{-\frac{\aa}{4}|x|}\right )
    \int_0^\infty  e^{-\frac{\aa}{4} r} r \dd r +
    \int_0^\infty e^{-\frac{\aa}{4} r} r^2\dd r\right ]
     =
    \frac{32}{\aa^3}\big(e^{-1} + 2\big).
\end{split}
\end{equation}
Next, denote
\begin{equation*}\label{key}
    \Gb (\delta )
    :=
    \left (\aa^2\sfM (\delta) + \sfN_{\aa,V}(\delta)\right )\wDb (\delta).
\end{equation*}
real analyticity of  $\wDb (\delta)$, $\sfN_{\aa,V}(\delta)$, and
$\sfM(\dl)$ with respect to $\dl,\beta  \in (0,1)$ implies that
$\Gb (\delta )$ is also real analytic in $\delta,\beta \in
(0,1)$. It follows from the
expansion~\eqref{eq:expansion_recall} and the above estimates that
$\Gb (\delta )$ is a bounded operator, whose norm behaves as $\|
\Gb(\delta)\| = \cO_{\rm u}(\beta^2)$ as $\beta \arr 0+$. Using
Lemma~\ref{le-decompoB}\,(iv), Propositions~\ref{prop:expansion}
and~\ref{prop:analytic}, and the estimate~\eqref{eq:Mprime}~ we get
applying the triangle inequality for the operator norm
\begin{equation}\label{eq:Gprime}
\begin{split}
        \|\p_\dl\Gb (\dl )\|
        &\le
        \big[\aa^2 \|\partial_{\delta} \sfM (\dl )\| + \|\p_\dl\sfN_{\aa,V}(\dl)\|\big ]
        \|\wDb (\dl)\|\\
        & \qquad\quad + \big[\aa^2 \|\sfM (\dl)\|
        + \|\sfN_{\aa,V}(\dl)\|\big]
        \|\p_\dl\wDb (\dl)\|
        = \cO_{\rm u}(1), \qquad \beta \arr 0+.
\end{split}
\end{equation}
Next, for all sufficiently small $\beta > 0$, the operator $\sfI -
\aa\Gb (\delta )$ is invertible  and $\sfI -
\aa\wB(\delta)\wDb(\delta) $ can be factorized as
\begin{equation*}\label{eq-spectral4}
    \sfI -  \aa\wB(\delta)\wDb(\delta)
    =
    (\sfI- \aa\Gb (\delta ))
    \left(\sfI- \Pb (\delta ) \right),
\end{equation*}
where $\Pb (\delta )$ is the rank-one operator given by
\begin{equation*}\label{key}
\begin{split}
    \Pb (\delta )
    & := (\sfI - \aa\Gb (\delta ))^{-1}
                \sfL(\delta ) \aa^3\wDb (\delta ) \\
    & = - \aa^3\frac{\ln \delta }{4\pi }
     \left ( \cdot,\wDb (\delta ) V^{-1} \right )_{L^2(\dR^2)} (\sfI - \aa\Gb (\delta ))^{-1} V^{-1}.
\end{split}
\end{equation*}
Thus, we get for all sufficiently small $\beta > 0$
\begin{equation*}\label{key}
    \forall \delta > 0, \qquad
    \dim\ker\left (\sfI -\aa\wB(\delta)\wDb(\delta)\right )
    =
    \dim\ker\left (\sfI - \Pb(\delta)\right ).
\end{equation*}
Observe that $\dim\ker\left (\sfI - \Pb(\delta)\right ) \in \{0,1\}$. Using the relation $\dim\ker(\sfI - \sfP) = 1$
\iff~$\Tr\sfP =  1$ (true for any rank-one
operator $\sfP$), we find that  $\dim\ker (\sfI - \Pb (\delta )) = 1$
\iff
\begin{equation}\label{eq-spectralfinal}
 \boxed{4\pi +\aa^3\ln \delta
    \left ( \wDb (\delta )V^{-1},
    (\sfI- \aa\Gb (\delta ))^{-1}V^{-1}\right )_{L^2(\dR^2)} =0 \,.}
\end{equation}
In view of this reduction,
for all sufficiently small $\beta > 0$, each solution $\delta > 0$ of the equation~\eqref{eq-spectralfinal} corresponds to a simple eigenvalue $-\frac{\aa^2}{4} - \delta^2$
of $\Op$.

\vspace{0.6ex}

\noindent\underline{\emph{Step 2: Existence and uniqueness of solution for~\eqref{eq-spectralfinal}.}}\,
Define the function
\begin{equation*}\label{key}
    \eta_\aa(\beta,\delta)
    :=
    2\aa^3
    \left ( \wDb (\delta ) V^{-1} ,
    (\sfI- \aa\Gb (\delta ))^{-1} V^{-1} \right )_{L^2(\dR^2)}.
\end{equation*}
We remark that the function $\eta_\aa(\cdot,\cdot)$ is real analytic
in $\dl, \beta > 0$ lying in a sufficiently small right neighbourhood of
the origin, thanks to real analyticity with respect to the same
parameters of the operator-valued functions $\wDb(\delta)$ and
$\Gb(\delta)$; see Proposition~\ref{prop:analytic} and the
discussion in {\it Step 1}. The spectral
condition~\eqref{eq-spectralfinal} can be equivalently written as
\begin{equation}\label{eq:spectral_easy}
        \eta_\aa(\beta,\delta) = -\frac{8\pi}{\ln \delta}.
\end{equation}
Applying the Neumann series argument and
using that $\|\Gb(\delta)\| = \cO_{\rm u}(\beta^2)$ ($\beta \arr 0+$)  we find
\begin{equation*}\label{key}
    \|\left (\sfI - \aa\Gb (\delta )\right )^{-1} - \sfI\| = o_{\rm u}(1),
    \qquad \beta\arr 0+.
\end{equation*}
Hence, we conclude from $\|\wDb(\delta)\| = \cO_{\rm u}(\beta^2)$ as
$\beta\arr 0+$ that $\eta_\aa(\beta,\delta) = \cO_{\rm u}(\beta^2)$ as $\beta\arr 0+$. Combining the expansion~\eqref{eq:expansion_recall} and Corollary~\ref{cor:cD} we arrive at
\begin{equation}\label{eq:expansion_eta}
\begin{split}
    \eta_\aa(\beta,\delta)
    & =
    2\aa^3
    \beta^2\int_{\dR^2}\int_{\dR^2}
    \sfD^{(1)}_V(\delta)(x,y) V^{-1}(x)V^{-1}(y) \dd x \dd y + \cO_{\rm u}(\beta^4) \\
    & =
    2\aa^3
    \beta^2\int_{\dR^2}\int_{\dR^2}
    \sfD^{(1)}(\dl)(x,y) \dd x \dd y + \cO_{\rm u}(\beta^4)\\
    & =
    \cD_{\aa,f}(\delta)\beta^2 + \cO_{\rm u}(\beta^4),\qquad \beta\arr 0+.
\end{split}
\end{equation}
Since $\cD_{\aa,f} = \cD_{\aa,f}(0)  > 0$ by Proposition~\ref{prop:fractional},
Corollary~\ref{cor:cD} yields
that $\eta_\aa(\beta,\delta) > 0$  for all sufficiently small $\delta, \beta > 0$. The continuous function $(0,1)\ni \delta\mapsto -\frac{8\pi}{\ln \delta}$
vanishes as $\delta\arr 0+$ and its range coincides with $(0,\infty)$. Hence, for all sufficiently small $\beta > 0$ the equation~\eqref{eq:spectral_easy} has at least one solution $\delta(\beta) > 0$, which satisfies $\delta(\beta)\arr 0+$
taking the lower bound in Proposition~\ref{prop:lower_bound} into account. In particular, we proved that $\#\sd(\Op) \ge 1$
holds for all sufficiently small $\beta > 0$.

It remains to show that in fact for all sufficiently small $\beta >0$ holds $\#\sd(\Op) = 1$. Indeed, the equation~\eqref{eq:spectral_easy} can be rewritten as
\begin{equation*}\label{eq:spectral_easy2}
    \wt\eta_{\aa}(\beta,\delta)  = 0 \qquad
    \text{with}~~~
    \wt\eta_{\aa}(\beta,\delta) := \exp\left (-\frac{8\pi}{\eta_\aa(\beta,\delta)}\right) - \delta.
\end{equation*}
Suppose that $\beta > 0$ is small enough and that $\wt\eta_{\aa}(\beta,\delta) = 0$ has two solutions
$\delta_1,\delta_2 \in (0,1)$ such that $\delta_1 < \delta_2$.
By Rolle's theorem there exists a point $\delta_\star  \in (\delta_1,\delta_2)$ such that
\begin{equation}\label{eq:Rolle}
    (\p_\delta \wt\eta_{\aa})(\beta,\delta_\star) = 0.
\end{equation}
Computing the partial
derivative of $\wt\eta_\aa$ with respect to $\delta$ we get
\begin{equation}\label{eq:deriv_wt_eta}
    \p_\delta\wt\eta_{\aa}(\beta,\delta)
    =
    \frac{8\pi\p_{\delta}\eta_\aa(\beta,\delta)}{\eta_\aa(\beta,\delta)^2}\exp\left (-\frac{8\pi}{\eta_\aa(\beta,\delta)}\right) - 1.
\end{equation}
Differentiating the operator-valued function
$(\sfI - \sfG_\aa(\dl))^{-1}$ with respect to $\dl$ we find
\begin{equation*}\label{der:G}
\begin{split}
    & \lim_{\dl'\to \dl } \frac{( \sfI-\aa \Gb (\dl '))^{-1}- (\sfI-\aa \Gb (\dl ))^{-1}}{\dl'-\dl }\\
    &\qquad = \aa\lim_{\dl'\to \dl }
    (\sfI-\aa \Gb (\dl'))^{-1}
    \frac{\Gb (\dl ')-\Gb (\dl) }{\dl'-\dl }(\sfI -\aa \Gb (\dl))^{-1} \\
    &\qquad\qquad =
    \aa(\sfI-\aa \Gb (\dl ))^{-1} \p_ \dl\Gb (\dl )
    (\sfI-\aa \Gb (\dl ))^{-1}.
\end{split}
\end{equation*}
Hence, differentiating the scalar function $\eta_\aa$ with respect to $\delta$ and applying Propositions~\ref{prop:expansion},~\ref{prop:analytic} and the estimate~\eqref{eq:Gprime}, we end up with
\[
\begin{split}
    \p_{\delta}\eta_\aa(\beta,\delta)
    & =
    2\aa^3
    \left (\p_\delta \wDb (\delta ) V^{-1} ,
    (\sfI- \aa\Gb (\delta ))^{-1} V^{-1} \right )_{L^2(\dR^2)}\\
    & \qquad +
    2\aa^4
    \left ( \wDb (\delta ) V^{-1} ,
    (\sfI-\aa \Gb (\dl ))^{-1}
    (\p_\delta\Gb (\delta )) (\sfI- \aa\Gb (\delta ))^{-1} V^{-1} \right )_{L^2(\dR^2)}\\
    & =  \cO_{\rm u}(1),
    \quad\beta\arr 0+.
\end{split}
\]
Eventually, we derive from~\eqref{eq:deriv_wt_eta} that $\p_\delta\wt\eta_{\aa}(\beta,\delta)  = -1 + o_{\rm u}(1)$ as $\beta\arr 0+$, which contradicts to~\eqref{eq:Rolle} for all sufficiently small
    $\beta > 0$.

\vspace{0.6ex}

\noindent\underline{\emph{Step 3: Asymptotic expansion.}}\,
Let $\dl(\beta) > 0$ be the unique solution of~\eqref{eq:spectral_easy} for sufficiently small
$\beta > 0$.
Substituting the expansion~\eqref{eq:expansion_eta} into
the spectral condition~\eqref{eq:spectral_easy}
and making an additional use of $\delta(\beta) = o(1)$
(as $\beta\arr 0+$) we get
\begin{equation*}\label{key}
    8\pi + \ln \delta(\beta)\beta^2
    \cD_{\aa,f}(\delta(\beta))
    +   o(\ln \delta(\beta)\beta^2) = 0,\qquad
    \beta \arr 0 +.
\end{equation*}
Applying Corollary~\ref{cor:cD} we obtain
\[
    8\pi + \ln \delta(\beta)\beta^2
    \cD_{\aa,f} + o(\ln \delta(\beta)\beta^2) = 0,\qquad\beta \arr 0 +.
\]
Hence, we deduce
\begin{equation}\label{eq-asymdelta}
    \delta(\beta) =
    \exp \left( - \frac{8\pi}{\cD_{\aa,f}\beta^2}\right) \left (1+o(1) \right ),\qquad\beta \arr 0 +.
\end{equation}
Finally, the asymptotic expansion of $\lm_1^\aa(\beta)$ in~\eqref{eq:main_expansion}
follows from~\eqref{eq-asymdelta} and the identity
$\lm_1^\aa(\beta) = -\frac14\aa^2 - \delta^2(\beta)$.\qed

\section{Discussion}\label{sec:dis}

The main result of this paper
might be possible to extend for less regular $f$ with a non-compact support. A natural limitation of admissible generalizations is finiteness of the constant $\cD_{\aa,f}$
in Theorem~\ref{thm:main}.

Apparently, a similar asymptotic analysis can be performed in space
dimensions $d \ge 4$, where not much is known apart from the result
in~\cite{LO16} mentioned above. We note that a convincing physical
motivation is missing in this case, so far at least, and also one
can expect here that for all sufficiently small $\beta>0$ the
discrete spectrum would be empty.

It is also worth noting that analogous spectral problem can be
considered for the Robin Laplacian in a locally perturbed
half-space. In view of~\cite{EM14} one may expect that the existence
of the unique bound state for all sufficiently small $\beta > 0$
will depend on the function $f$, defining the profile of the
deformation. However, the technique to deal with the asymptotic
analysis should be different for the Robin spectral problem, because
a Birman-Schwinger-type principle with an explicitly given integral
operator is not available in this setting.

Finally, let us point out that in the present paper we have not
touched the case where the interaction support is a topologically
non-trivial surface which could be regarded as a certain analogue of
spectral analysis in infinite, topologically nontrivial
layers~\cite{CEK04}. It is not so clear to what extent the main
result and the technique of the present paper can be generalized to
include such more involved geometries.

\subsection*{Acknowledgements}
P.E. and V.L. acknowledge the support by the grant No.~17-01706S of
the Czech Science Foundation (GA\v{C}R). S.K and V.L. acknowledge
the support by the grant DEC-2013/11/B/ST1/03067 of the Polish
National Science Centre. Moreover, V.L. is grateful to the University of Zielona G\'{o}ra for the hospitality during a visit in February 2016, where a part of this paper was written.
S.K. thanks the Department of Theoretical Physics, NPI CAS in \v{R}e\v{z}, for the hospitality
during a visit in November 2016.

%
\newcommand{\etalchar}[1]{$^{#1}$}

\end{document}